\renewcommand{\geq}{\geqslant}
\renewcommand{\leq}{\leqslant}
\DeclareMathOperator{\conv}{conv}
\DeclareMathOperator{\vol}{vol}
\DeclareMathOperator{\erf}{erf}
\newtheorem{lemma}{Lemma}[section]
\newtheorem{theorem}{Theorem}[section]
\begin{document}
\setlength{\abovedisplayskip}{4pt plus 0pt minus 0pt}  
\setlength{\belowdisplayskip}{3pt plus 0pt minus 0pt}
\begin{center}
\Large\bf Extremal  convex bodies for affine measures of symmetry
\end{center}
\begin{center}
Safronenko Evgenii
\end{center}
\begin{center}
{\bf Abstract}
\end{center}
This paper is devoted to measures of symmetry based on distance between centroid and one of the centers of John and L\"owner ellipsoid. The author proves the accuracy of the derived upper bounds for the considered measures of symmetry.

\section{Introduction}
By convex body or simply a body, we shall mean a compact convex subset of $\mathbb{R}^n$ with non-empty interior. Also we denote by $\mathcal{K}_n$ the set of all convex bodies in $\mathbb{R}^n.$ For any convex body $K\in\mathcal{K}_n$ we call the point $p(K)\in K$ affine invariant if for any nonsingular affine map $T:\mathbb{R}^n\to\mathbb{R}^n$ we have 
$$
p(T(K))=T(p(K)).
$$

In 2011 in paper \cite{MMCSEW} M. Meyer, C. Sch\"utt and E.M. Werner  defined measure of asymmetry for any convex body $K\in\mathcal{K}_n$ by following: for fixed two affine invariant points $p_1(K),p_2(K)\in K$ as quantity~$d$ equal
\begin{align*}
&d(p_1(K),p_2(K))=0,\quad\text{if } p_1(K)=p_2(K)\\
\text{and}\qquad\qquad\qquad&\\
&d(p_1(K),p_2(K))=\frac{\|p_1(K)-p_2(K)\|}{\vol_1(l\cap K)},\quad\text{if } p_1(K)\ne p_2(K),\qquad\qquad\qquad\qquad
\end{align*}
where $l$ --- line, through $p_1(K)$ and $p_2(K)$. Corresponding measure of symmetry is the map defined on the set $\mathcal{K}_n$ by
$$
K\to\varphi_{p_1,p_2}(K)=1-d(p_1(K),p_2(K)).
$$

The quantity $d(p_1(K), p_2(K))$ is affine invariant and takes values from $0$ to $1.$ If the convex body $K$ is centrally symmetric then for all affine invariant points $p_1(K)$ and $p_2(K)$ we have $d(p_1(K),p_2(K))=0.$ Centrally symmetric bodies are not the only bodies with this property. For example, the measure of asymmetry for simplex will also be $0$ for any pair of affine invariant points.

This article is devoted to study dependence of $n$ the quantity 
$$
\max_{K\in\mathcal{K}_n}d(p_1(K),p_2(K)).
$$
for the pairs of some classic affine invariant points such as centroid g(K), centers of the John and the L\"owner ellipsoids of convex body $K\in\mathcal{K}_n.$

In paper \cite{MMCSEW} authors consider centroid $g(K)$ and Santal\'o point $s(K).$ They give estimate
\begin{align}
d(g(K),s(K))\leq1-\frac2{n+1},
\end{align}
but with the little mistake in the argument. After that in article \cite{Mordhorst} O. Mordhorst give the correct proof of this fact. The main result of the paper \cite{MMCSEW} is construction of convex bodies $Q_n\in\mathcal{K}_{n+1}$ for which the values of the measures of asymmetry $d(g(Q_n),s(Q_n))$ are separated from zero
\begin{align*}
\frac1e\frac{\sqrt{e\pi}-2}{\sqrt{e\pi}+\frac2{e-1}}&\leq\varliminf_{n\to\infty}d(g(Q_n),s(Q_n))\leq\\
&\leq\varlimsup_{n\to\infty}d(g(Q_n),s(Q_n))\leq\frac1e\frac{\sqrt{e\pi}-1}{\sqrt{e\pi}+\frac1{e-1}}.
\end{align*}
Also in the same article there is an example of convex bodies $G_n\in\mathcal{K}_{n+1}$ for whose centers of the John and L\"owner ellipsoids can be "far away"
\begin{align}
\label{PastResult}
d(j(G_n),l(G_n))\xrightarrow[n\to\infty]{}\frac12.
\end{align}
In 2017 O. Mordhost \cite{Mordhorst} provided the upper bound true for arbitrary convex body $K\in\mathcal{K}_n$
\begin{align}
\label{MordhorstJL}
d(j(K),l(K))\leq1-\frac2{n+1}.
\end{align}
Also he proved that this inequality is asymptoically sharp up to the constant.

Using well-known inclusions \eqref{JohnWellKnown},\eqref{LownerWellKnown} and \eqref{CentroidWellKnown}, by similar arguments from \cite{Mordhorst}, it is easy to show 
\begin{align}
\label{aprioryint3}
d(g(K),j(K))&\leq1-\frac2{n+1},\\
\label{aprioryint2}
d(g(K),l(K))&\leq1-\frac2{n+1}.
\end{align}
for any convex body $K\in\mathcal{K}_n.$

In the section 6 will be given construction of convex bodies $F_n,W_n\in\mathcal{K}_n$ for which estimates \eqref{aprioryint3}  and \eqref{aprioryint2} are asymptotically sharp up to the order $n$
\begin{align*}
d(g(F_n),j(F_n))=1-\frac1nC^{*}+o\left(\frac1n\right)
\end{align*}
and
\begin{align*}
d(g(W_n),l(W_n))=1-\frac1nC^{**}+o\left(\frac1n\right),
\end{align*}
where $C^*\approx 13$ and $C^{**}\approx 20$.

In order to show the universality of given approach we shall also construct convex bodies $M_n\in\mathcal{K}_n$ for whose estimation for measure of symmetry \eqref{MordhorstJL} is exact
$$
d(j(M_n),l(M_n))=1-\frac8{n}+o\left(\frac1n\right)
$$
when $n\to\infty$.

Thus, in this article will be proven estimates asymptotically sharp up to the order $n$
$$
\frac2{n+1}\leq\varphi_{g,j}\leq1,\quad\frac2{n+1}\leq\varphi_{g,l}\leq1\quad\text{and}\quad\frac2{n+1}\leq\varphi_{j,l}\leq1.
$$
\medbreak
\section{Definitions and auxiliary results}
For any $x\in\mathbb{R}^n$ for $1\leq k\leq n$ we denote by $x^k$ the $k$-th coordinate, its scalar product with $y\in\mathbb{R}^n$ by $\langle x,y\rangle =\sum_{k=1}^nx^ky^k$ and corresponding euclidean norm of $x$ by $\|x\|= \sqrt{\langle x,x\rangle}$. Convex hull of given subsets $A$ and $B$ of $\mathbb{R}^n$ is denoted by $\conv(A,B)$.

For unit vector $\xi\in S^{n-1}=\{x\in\mathbb{R}^n : \|x\|=1\}$ define its corresponding orthogonal subspace 
$$
\xi^\perp=\{x\in\mathbb{R}^n : \langle x,\xi\rangle=0\}.
$$
Fix number $t\in\mathbb{R}$, unit vector $\xi\in S^{n-1}$ and convex body $K\in\mathcal{K}_n.$ We call corresponding $(n-1)$-dimensional section of $K$, orthogonal to the direction $\xi$ and passing through the point $t\xi$
$$
K(\xi,t)=\{x\in K : \langle x,\xi\rangle=t\}= K\cap\xi^\perp.
$$
We shall be interested in cross sections orthogonal to the direction $e_n=(0,\ldots,0,1)$. We shall denote them by $K(t)=K(e_n,t)$ and identify with $(n-1)$-dimensional convex body in $\mathbb{R}^{n-1}$. 

We introduce the following notation for standard convex bodies, which we need for future constructions: $B^n_1=\{x\in\mathbb{R}^n: \sum_{k=1}^n|x^k|\leq1\}$ is $n$-dimensional octahedron, $B_2^n=\{x\in\mathbb{R}^n: \|x\|\leq1\}$ is euclidean ball, $B_\infty^n=\{x\in\mathbb{R}^n: \underset{k}{\max}|x^k|\leq1\}$ is $n$-dimensional cube, and $\Delta_n$ is $n$-dimensional simplex, inscribed into the ball $B^n_2$.
Denote by $\vol_n$ $n$-dimensional Lebesgue measure. $\partial K$~is~the boundary of convex body $K$.

It is well-known (see e.g. \cite{{Zaguskin}, {Danzer}}), that for any convex bodies $K\in\mathcal{K}_n$ there exists inscribed ellipsoid of maximal volume $J(K)$ and described ellipsoid $L(K)$ of minimal volume. This ellipsoids are solutions of corresponding extremal problems
\begin{align*}
\vol_n(J(K))=\max_{\mathcal{E}\subset K}\vol_n(\mathcal{E}),\\
\vol_n(L(K))=\min_{K\subset\mathcal{E}}\vol_n(\mathcal{E}),
\end{align*}
where $\mathcal{E}$ is ellipsoid. We say that convex body is in John position if the ball $B^n_2$ is John ellipsoid. Centers of John ellipsoid $J(K)$ and L\"owner ellipsoid $L(K)$, respectively $j(K)$ and $l(K)$, are affine invariant points. 

We need the next lemma to compute centers of John ellipsoid  of  our preliminary constructions.
\begin{lemma}[M. Meyer, C. Schütt, E. M. Werner \cite{MMCSEW}]
\label{wellknown}
Let for some positive number $\alpha>0$ the ball $\alpha B^n_2$ is John ellipsoid (respectively L\"owner ellipsoid) of convex body $K\in\mathbb{R}^n$. Then for any $s,t\geq0$ the John ellipsoid (respectively L\"owner ellipsoid) of convex body $sK+tB^n_2$ is the ball $(s\alpha+t)B^n_2$.
\end{lemma}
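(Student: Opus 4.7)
The plan is to exploit the contact-point characterization of John's theorem together with the uniqueness of the John (respectively Löwner) ellipsoid. First the easy containment: from $\alpha B_2^n\subset K$ one gets
$$
(s\alpha+t)B_2^n \;=\; s(\alpha B_2^n)+tB_2^n \;\subset\; sK+tB_2^n,
$$
and analogously the reverse inclusion holds when $\alpha B_2^n$ is the Löwner ellipsoid of $K$. So $(s\alpha+t)B_2^n$ is at least admissible for the relevant extremal problem, and the remaining task is extremality.

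For extremality I would transfer John's decomposition of the identity from $K$ to $sK+tB_2^n$. Since $\alpha B_2^n$ is the John ellipsoid of $K$, there exist contact points $u_1,\dots,u_m\in\partial K\cap\alpha S^{n-1}$ and positive weights $c_i$ satisfying $\sum c_i u_i=0$ and $\sum c_i\,u_i\otimes u_i=\alpha^2\Id$. At each $u_i$ the outward normal to $K$ coincides with the outward normal to $\alpha B_2^n$, namely $u_i/\alpha$, so the supporting hyperplane to $sK$ at $su_i$ and the supporting hyperplane to $tB_2^n$ at $t(u_i/\alpha)$ share this normal and combine into a supporting hyperplane of $sK+tB_2^n$ at
$$
v_i \;:=\; su_i+t\,\frac{u_i}{\alpha} \;=\; \frac{s\alpha+t}{\alpha}\,u_i.
$$
Since $\|v_i\|=s\alpha+t$, the $v_i$ are contact points of $(s\alpha+t)B_2^n$ with $sK+tB_2^n$, and keeping the same weights $c_i$ one obtains by a direct rescaling $\sum c_i v_i=0$ and $\sum c_i\,v_i\otimes v_i=(s\alpha+t)^2\Id$. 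By John's theorem and uniqueness, $(s\alpha+t)B_2^n$ must therefore be the John ellipsoid of $sK+tB_2^n$.

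The Löwner case runs identically via the dual contact-point characterization of the circumscribed extremal ellipsoid, with $K\subset\alpha B_2^n$ replacing $\alpha B_2^n\subset K$ and the inclusions reversed. The only real subtlety is verifying that $v_i$ truly lies on $\partial(sK+tB_2^n)$; I expect this to be the one place demanding care. However, once one notes that the outward normal to $K$ at a contact point with its inscribed (or circumscribed) ball is prescribed by the ball, additivity of support functions glues the two individual supporting hyperplanes into a joint supporting hyperplane of the Minkowski sum, and the rest of the argument is essentially a rescaling of John's identity.
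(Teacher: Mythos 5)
Your argument is correct: the inclusion $(s\alpha+t)B_2^n\subset sK+tB_2^n$ (reversed in the L\"owner case), the transfer of the contact points $u_i$ to $v_i=\frac{s\alpha+t}{\alpha}u_i$ via additivity of support functions (note that at a contact point the supporting hyperplane of $K$ is forced to be the tangent hyperplane of the ball, so $h_K(u_i/\alpha)=\alpha$), and the invariance of John's decomposition under this rescaling together give the conclusion from Theorem \ref{JohnTH}. The paper itself states this lemma without proof, citing \cite{MMCSEW}, and your contact-point argument is precisely the standard route one would take there, so there is nothing to bridge.
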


Another example of affine invariant points are centroid $g(K)$ and Santal\'o point $s(K)$ of convex body $K\in\mathcal{K}_n$ \cite{MSW1,MSW2}
\begin{align*}
&g(K)=\frac1{\vol_n(K)}\int_Kx\ dx\qquad\qquad\qquad\qquad\qquad\qquad\qquad\\
\text{and}\qquad\qquad\qquad\qquad\qquad\qquad\qquad&\\
&\vol_n\left(K^{s(K)}\right)=\min_{x\in K}\vol_n(K^{x}),
\end{align*}
where $K^x$ is the polar of convex body $K$ relative to the point $x$
$$
K^x=\{y\in\mathbb{R}^n : \langle z-x,y\rangle\leq1 \text{ for all }z\in K \}.
$$

The next inclusions are well-known properties (see e.g.\cite[chapter 3]{Pisier} and \cite[\S7]{BONNESEN}) of corresponding affine invariant points
\begin{align}
\label{JohnWellKnown}
J(K)-j(K)&\subset K-j(K)\subset n(J(K)-j(K)),\\
\label{LownerWellKnown}
\frac1n(L(K)-l(K))&\subset K-l(K)\subset L(K)-l(K),\\
\label{CentroidWellKnown}
K-g(K)&\subset n(g(K)-K),
\end{align}
for $K\in\mathcal{K}_n.$

From Minkowski theorem (see e.g. \cite[\S 4.2]{Shcneider} or \cite[chapter 6 \S1]{Hadwiger}) follows that for all convex bodies $K,$ $L\in\mathcal{K}_n$ and non-negative numbers $\lambda,\mu\geq0$ the volume of body $\lambda K+\mu L$ is polynomial of degree $n$ of $\lambda$ and $\mu$
$$
\vol_n(\lambda K+\mu L)=\sum_{k=0}^nC_n^k\lambda^{n-k}\mu^kV_{n-k,k}(K,L).
$$
Coefficients $V_{n-k,k}(K,L)$ are called mixed volumes. They are homogeneous of degree $n-k$ and $k$ in $K$ and $L$ respectively.
 
To calculate centroids of preliminary convex bodies in the section $4$ we need lemma.
\begin{lemma}[M. Meyer, C. Schütt, E. M. Werner \cite{MMCSEW}]
\label{LemmaMixedVol}
Let $K,L\in\mathcal{K}_n$ be convex bodies, $c>0$ is positive number. Consider convex body $P_n=\conv\left(\left(K,0\right),\left(L,c\right)\right).$ Then $(n+1)$-th coordinate of centroid $g^{n+1}(P_n)$ of convex body $P_n$ satisfy
\begin{align}
\label{CentroidForm}
g^{n+1}(P_n)=\frac{c}{n+2}\frac{\sum_{k=0}^n(k+1)V_{n-k,k}(K,L)}{\sum_{k=0}^nV_{n-k,k}(K,L)}.
\end{align}
\end{lemma}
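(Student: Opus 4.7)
The plan is to compute the $(n{+}1)$-th coordinate of the centroid by Fubini, slicing $P_n$ by horizontal hyperplanes and using the Minkowski expansion already cited in the excerpt. Since $P_n=\conv((K,0),(L,c))$, convexity forces the slice at height $t\in[0,c]$ to be the Minkowski interpolation
$$
P_n(t)=\Bigl(1-\tfrac{t}{c}\Bigr)K+\tfrac{t}{c}L,
$$
identified with a subset of $\mathbb{R}^n\times\{t\}$. I would verify this slice identity first (inclusion $\supset$ is immediate from the definition of convex hull; inclusion $\subset$ follows because every point of $P_n$ at height $t$ is, by Carathéodory/definition of convex hull, a convex combination of a point of $(K,0)$ and a point of $(L,c)$ with weights $1-t/c$ and $t/c$).

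Next, I would apply the Minkowski polynomial expansion stated just above the lemma to the slice:
$$
\vol_n(P_n(t))=\sum_{k=0}^n\binom{n}{k}\Bigl(1-\tfrac{t}{c}\Bigr)^{n-k}\Bigl(\tfrac{t}{c}\Bigr)^{k}V_{n-k,k}(K,L).
$$
Then by Fubini,
$$
g^{n+1}(P_n)=\frac{\int_0^{c}t\,\vol_n(P_n(t))\,dt}{\int_0^{c}\vol_n(P_n(t))\,dt}.
$$
After the substitution $u=t/c$ the two integrals reduce, term by term, to Beta integrals $\int_0^1 u^{k}(1-u)^{n-k}\,du=\frac{k!(n-k)!}{(n+1)!}$ in the denominator and $\int_0^1 u^{k+1}(1-u)^{n-k}\,du=\frac{(k+1)!(n-k)!}{(n+2)!}$ in the numerator.

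Carrying out the arithmetic, the binomial coefficient $\binom{n}{k}$ combines with the denominator factorial so that each $V_{n-k,k}$ contributes weight $\frac{1}{n+1}$ to $\int\vol_n(P_n(t))dt$ and weight $\frac{k+1}{(n+1)(n+2)}$ to $\int t\,\vol_n(P_n(t))dt$ (times the appropriate power of $c$). Dividing gives exactly $\frac{c}{n+2}\cdot\frac{\sum_k(k+1)V_{n-k,k}(K,L)}{\sum_kV_{n-k,k}(K,L)}$, which is \eqref{CentroidForm}.

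There is no real obstacle: the only nontrivial point is the slice identity $P_n(t)=(1-t/c)K+(t/c)L$, and after that the computation is a clean evaluation of Beta integrals against the Minkowski expansion. I would be careful only to keep the binomial coefficients and factorials straight so the $(k+1)$ factor in the numerator emerges cleanly rather than being hidden inside a rearrangement.
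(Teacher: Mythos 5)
Your proof is correct: the slice identity $P_n(t)=(1-t/c)K+(t/c)L$, the Minkowski polynomial expansion of $\vol_n(P_n(t))$, Fubini for the $(n{+}1)$-th centroid coordinate, and the Beta integrals $\int_0^1u^k(1-u)^{n-k}\,du=\frac{k!(n-k)!}{(n+1)!}$ and $\int_0^1u^{k+1}(1-u)^{n-k}\,du=\frac{(k+1)!(n-k)!}{(n+2)!}$ combine exactly as you say to give \eqref{CentroidForm}. The paper itself states this lemma without proof, citing \cite{MMCSEW}; your argument is the standard derivation of that result, so there is nothing to add.
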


Also we need mixed volumes $V_{n-k,k}\left(B^n_1, B^n_\infty\right)$ and $V_{n-k,k}\left(B^n_2, B^n_\infty\right).$ They are easy calculated by next lemma.
\begin{lemma}[A. Pajor {\cite[theorem 1.10]{Pajor2}}, {{\cite[theorem 6]{Pajor1}}}]
\label{PajorTH}
For any convex body $K\in\mathcal{K}_n$ mixed volumes $V_{n-k,k}(K,B^n_\infty)$ satisfy
$$
V_{n-k,k}(K,B^n_\infty)=\frac{2^k}{C^k_n}\Biggl(\underset{|I|=n-k}{\sum_{I\subset\{1,\ldots,n\}}}\vol_nP^IK\Biggr),
$$
where for subset $I\subset\{1,\ldots,n\}, |I|=n-k$ by $P^I$ denoting projection from space $\mathbb{R}^n$ into $(n-k)$ dimensional subspace $\mathbb{R}^I$.
\end{lemma}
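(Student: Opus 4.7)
The plan is to evaluate $\vol_n(K + tB^n_\infty)$ in two different ways and match coefficients in $t$. On one hand, the defining Minkowski expansion reads
$$\vol_n(K + tB^n_\infty) = \sum_{k=0}^n C_n^k\, t^k\, V_{n-k,k}(K, B^n_\infty),$$
so it suffices to produce an independent expression for the left-hand side in terms of projections. The tool for this is the zonotope structure of the cube, $B^n_\infty = \sum_{j=1}^n [-e_j, e_j]$, which after a volume-preserving translation lets me rewrite $tB^n_\infty = \sum_{j=1}^n [0, 2te_j]$ and then add the axis-aligned segments to $K$ one at a time.

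The building block is the elementary identity
$$\vol_n(L + [0, \lambda e_j]) = \vol_n(L) + \lambda\, \vol_{n-1}\bigl(P^{\{1,\ldots,n\}\setminus\{j\}} L\bigr),$$
which I would obtain by Fubini along the $e_j$-axis: extruding $L$ by $\lambda$ in direction $e_j$ enlarges each $e_j$-fibre by $\lambda$, and integrating over the base gives the volume of the projection onto $e_j^\perp$ times $\lambda$. Since orthogonal projection commutes with Minkowski sum and $P^{\{1,\ldots,n\}\setminus\{j\}}[0,\lambda e_i] = [0,\lambda e_i]$ whenever $i \ne j$, an induction on $|J|$ then produces
$$\vol_n\Bigl(K + \sum_{j \in J}[0, 2te_j]\Bigr) = \sum_{J' \subset J}(2t)^{|J'|}\, \vol_{n-|J'|}\bigl(P^{\{1,\ldots,n\}\setminus J'} K\bigr)$$
for every $J \subset \{1,\ldots,n\}$.

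Specializing to $J = \{1,\ldots,n\}$ and reindexing by $I = \{1,\ldots,n\}\setminus J'$ gives
$$\vol_n(K + tB^n_\infty) = \sum_{k=0}^n (2t)^k \sum_{|I|=n-k} \vol_{n-k}(P^I K),$$
and equating this with the Minkowski expansion coefficient by coefficient in $t$ yields the claimed formula. The only delicate point is the bookkeeping in the induction — keeping straight which coordinate axis is projected out at each step, and verifying that the induction hypothesis continues to apply to the projected body in the reduced ambient dimension $n-1$ — but this is routine combinatorics rather than a genuine obstacle.
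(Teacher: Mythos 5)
Your argument is correct, but it cannot be compared step by step with the paper, because the paper does not prove this lemma at all: it is quoted as Pajor's theorem with citations, and only its specializations to $K=B^n_1$ and $K=B^n_2$ are used afterwards. What you give is a self-contained elementary proof along the classical lines behind Pajor's formula: write the cube as a Minkowski sum of coordinate segments, replace $tB^n_\infty$ by its translate $[0,2t]^n$, and use the extrusion identity $\vol_n(L+[0,\lambda e_j])=\vol_n(L)+\lambda\,\vol_{n-1}\bigl(P^{\{1,\ldots,n\}\setminus\{j\}}L\bigr)$ --- valid here because $L$ is convex, so every $e_j$-fibre is a (possibly degenerate) interval and Fubini applies --- to obtain by induction on the number of added segments that $\vol_n(K+tB^n_\infty)=\sum_{k=0}^n(2t)^k\sum_{|I|=n-k}\vol_{n-k}(P^IK)$. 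Matching this against the expansion $\vol_n(K+tB^n_\infty)=\sum_{k=0}^nC^k_n\,t^k\,V_{n-k,k}(K,B^n_\infty)$ used in the paper (two polynomials in $t$ agreeing for all $t\geq0$, hence coefficientwise) gives exactly the stated identity. Two small points are worth making explicit: in the induction step the hypothesis is applied to $P^{\{1,\ldots,n\}\setminus\{m\}}K$, which is again a convex body in the $(n-1)$-dimensional coordinate subspace, and a coordinate projection of a coordinate projection is again one, so the bookkeeping does close, as you indicate; and your computation shows that the paper's $\vol_nP^IK$ must be read as $\vol_{n-k}P^IK$ (an evident typo), which is indeed how the paper uses the lemma when deducing $V_{n-k,k}(B^n_1,B^n_\infty)=2^k\vol_{n-k}(B^{n-k}_1)$ and its analogue for $B^n_2$. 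Your route buys a short, verifiable proof in place of an external reference; the citation buys brevity and the greater generality of Pajor's original statements.
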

Remark that $P^IB^n_1=B^{n-k}_1$ and $P^IB^n_2=B^{n-k}_2$ for all subset $I\subset\{1,\ldots,n\}, |I|=n-k$. Therefore
\begin{align*}
V_{n-k,k}\left(B^n_1, B^n_\infty\right)=2^k\vol_{n-k}(B^{n-k}_1)\quad\text{and}\quad V_{n-k,k}\left(B^n_2, B^n_\infty\right)=2^k\vol_{n-k}(B^{n-k}_2).
\end{align*}

From homogeneity of mixed volumes $V_{n-k,k}$ we have
\begin{align}
\label{MixVolForm2}
V_{n-k,k}\left(\lambda B^n_1, \mu B^n_\infty\right)&=(2\mu)^k\lambda^{n-k}\vol_{n-k}(B^{n-k}_1)\\
\text{and}\qquad\qquad\quad\qquad\qquad\qquad\qquad\qquad\qquad\quad&\qquad\qquad\qquad\qquad\qquad\qquad\qquad\qquad\qquad\qquad\quad\notag\\
\label{MixVolForm}
V_{n-k,k}\left(\lambda B^n_2, \mu B^n_\infty\right)&=\left(2\mu\right)^k\lambda^{n-k}\vol_{n-k}(B^{n-k}_2).
\end{align}
for any $\lambda, \mu>0.$

We shall use notation $a\asymp b$ when there exists such non-negative numbers $c_1,c_2>0$ such that
$
c_1a\leq b\leq c_2a.
$

It is well-known that
\begin{align}
\label{volBn1}
\vol_n(B^n_1)=\frac{2^n}{\Gamma(1+n)}=\frac{2^n}{n!}\asymp\left(\frac{2e}{n}\right)^n\frac1{\sqrt{n}}
\end{align}
and
\begin{align}
\label{BallVoll}
\vol_n(B_2^n)&=\frac{\pi^\frac{n}2}{\Gamma\left(1+\frac{n}2\right)}\asymp\frac{(2e\pi)^\frac{n}2}{n^\frac{n+1}2},\qquad\text{when $n\geq1$}.
\end{align}
\medbreak
\section{Affine invariant points of cartesian product of convex bodies}
To prove lemma \ref{BodyProd} we need John theorem
(see e.g. \cite{KeithBall}).
\begin{theorem}[F. John \cite{JohnGeom}]
\label{JohnTH}
Let $K\subset\mathbb{R}^n$ be convex body.

The ball $B_2^n$ is John ellipsoid if and only if  $B_2^n\subset K$ and there are vectors  $\{u_i\}_{i=1}^m\subset\partial K\cap\partial B_2^n$ and positive numbers $\{c_i\}_{i=1}^m$ which satisfy conditions 
\begin{align*}
\sum_{i=1}^m c_iu_i&=0\\
\text{and}\qquad\qquad\qquad\qquad\qquad\qquad\qquad\qquad\qquad&\\
\sum_{i=1}^m c_i\langle x,u_i\rangle^2=\|x\|^2&\quad\text{for all }x\in\mathbb{R}^n.\qquad\qquad\qquad\qquad\qquad\qquad
\end{align*}

The ball $B_2^n$ is L\"owner ellipsoid if and only if $K\subset B_2^n$ and there are vectors $\{v_i\}_{i=1}^m\subset\partial K\cap\partial B_2^n$ and positive numbers $\{b_i\}_{i=1}^m$which satisfy conditions 
\begin{align*}
\sum_{i=1}^m b_iv_i&=0,\\
\text{and}\qquad\qquad\qquad\qquad\qquad\qquad\qquad\qquad\qquad&\\
\sum_{i=1}^m b_i\langle x,v_i\rangle^2=\|x\|^2\quad&\text{for all }x\in\mathbb{R}^n.\qquad\qquad\qquad\qquad\qquad\qquad\quad
\end{align*}
\end{theorem}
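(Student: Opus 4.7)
The plan is to establish the John-ellipsoid half of the theorem; the L\"owner statement then follows by polar duality, since $(B_2^n)^{\circ}=B_2^n$ and the identity $\vol_n(\mathcal E)\vol_n(\mathcal E^{\circ})=(\vol_n(B_2^n))^2$ for centred ellipsoids converts a volume-minimal circumscribed ellipsoid of $K$ into a volume-maximal inscribed ellipsoid of the polar $K^{\circ}$, contact points on $\partial B_2^n$ being preserved. For sufficiency, assume the decomposition $\{u_i\},\{c_i\}$ is given. Substituting $x=e_k$ and summing over $k$ gives $\sum c_i=n$, so the hypothesis is equivalent to $\sum c_iu_iu_i^{\top}=\Id$. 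Let $\mathcal E=x_0+T(B_2^n)\subset K$ be an arbitrary inscribed ellipsoid with $T$ symmetric positive definite. Each $u_i$ is a point where the hyperplane $\{y:\langle y,u_i\rangle=1\}$ supports $K$, so $\max_{y\in\mathcal E}\langle y,u_i\rangle=\langle x_0,u_i\rangle+\|Tu_i\|\leq1$. Weighting by $c_i$ and summing, the condition $\sum c_iu_i=0$ kills the $x_0$ contribution, Cauchy--Schwarz gives $\|Tu_i\|\geq\langle Tu_i,u_i\rangle$, and $\sum c_i\langle Tu_i,u_i\rangle=\tr T$; together these yield $\tr T\leq n$. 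AM--GM on the eigenvalues of $T$ now forces $\det T\leq1$, i.e.\ $\vol_n(\mathcal E)\leq\vol_n(B_2^n)$.

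For necessity, suppose $B_2^n$ is the John ellipsoid of $K$ and let $U=\partial K\cap\partial B_2^n$ be the (compact) contact set. Associate to each $u\in U$ the pair $(u,uu^{\top}-\tfrac{1}{n}\Id)$, regarded as an element of the finite-dimensional space of vectors and symmetric $n\times n$ matrices, and let $C$ be the closed convex cone they generate (closedness follows from compactness of $U$). If $(0,0)\notin C$, a separation theorem supplies $v\in\mathbb R^n$ and a traceless symmetric matrix $H$ with $\langle v,u\rangle+\langle Hu,u\rangle>0$ for every $u\in U$. The perturbed ellipsoid $\mathcal E_{\varepsilon}=\varepsilon v+(\Id+\varepsilon H)B_2^n$ has volume $\vol_n(B_2^n)\cdot\det(\Id+\varepsilon H)=\vol_n(B_2^n)+O(\varepsilon^2)$ (since $\tr H=0$), and compactness together with the strict positivity above forces $\mathcal E_{\varepsilon}\subset K$ for all sufficiently small $\varepsilon>0$. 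A subsequent dilation by $1+c\varepsilon$ with $c>0$ small still lies inside $K$ but strictly exceeds $\vol_n(B_2^n)$, contradicting maximality. Hence $(0,0)\in C$, and the conic coefficients produced in its representation yield the required $\{c_i\},\{u_i\}$: the vector component gives $\sum c_iu_i=0$ and the matrix component, after normalisation, gives the decomposition of the identity.

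The delicate step, and where I expect the main obstacle, is the perturbation estimate in necessity: one must guarantee $\mathcal E_{\varepsilon}\subset K$ globally on $\partial B_2^n$, not just in a neighbourhood of $U$. Near $U$ the strict inequality $\langle v,u\rangle+\langle Hu,u\rangle>0$, continuity, and compactness do the job; away from $U$ one must invoke the uniform gap $\min_{u\in\partial B_2^n\setminus N(U)}\mathrm{dist}(u,\partial K)>0$ afforded by the very definition of the contact set. Balancing the two regimes so that an honest $O(\varepsilon)$ outward dilation dominates the $O(\varepsilon^2)$ loss of volume is the bookkeeping heart of the argument; a cleaner alternative recasts necessity as a convex program $\max\det T$ subject to the contact-hyperplane constraints and reads the decomposition off the KKT conditions, using Carath\'eodory's theorem to bound the number of active contact points in $C$ by $n(n+3)/2+1$.
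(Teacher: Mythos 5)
The paper itself offers no proof of this statement: it is F.~John's classical characterization, imported from the literature (the cited source is Ball's survey \cite{KeithBall}), so your sketch can only be judged on its own terms. Your sufficiency half is the standard argument and is correct ($\sum_i c_i=n$, the supporting hyperplane $\langle y,u_i\rangle\le1$ of $K$ at each contact point, annihilation of the $x_0$-term by $\sum_i c_iu_i=0$, Cauchy--Schwarz, $\sum_i c_i\langle Tu_i,u_i\rangle=\tr T\le n$, AM--GM). The necessity half, however, has two concrete defects as written. First, the object you separate from is wrong: the closed convex \emph{cone} generated by the pairs $\bigl(u,\,uu^{\top}-\tfrac1n\Id\bigr)$ always contains $(0,0)$ (its apex), so ``$(0,0)\notin C$'' is vacuous; what you need is that $(0,0)$ lies in the \emph{convex hull} of the compact set $\bigl\{\bigl(u,\,uu^{\top}-\tfrac1n\Id\bigr):u\in U\bigr\}$, after which Carath\'eodory gives a finite decomposition. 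Second, and more seriously, your separation sign is backwards: if $\langle v,u\rangle+\langle Hu,u\rangle>0$ for all $u\in U$, then for a contact point $u$ one has $\max_{y\in\mathcal E_\varepsilon}\langle y,u\rangle=1+\varepsilon\bigl(\langle v,u\rangle+\langle Hu,u\rangle\bigr)+O(\varepsilon^2)>1$, while $K\subset\{y:\langle y,u\rangle\le1\}$; so your perturbed ellipsoid $\varepsilon v+(\Id+\varepsilon H)B_2^n$ leaves $K$ precisely at the contact points and the contradiction never materialises. Flipping the sign of $(v,H)$ repairs this, and then the two-regime containment argument you outline (first-order shrinkage of $\|\varepsilon v+(\Id+\varepsilon H)u\|$ below $1$ on a neighbourhood of $U$, hence inside $B_2^n\subset K$; a uniform positive distance to $\partial K$ off that neighbourhood) does close, leaving room for the $(1+c\varepsilon)$ dilation with $c$ small.

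The reduction of the L\"owner half ``by polar duality'' is also not as automatic as you assert, because the competitor ellipsoids are not centred at the origin. Contact points do transfer ($u\in\partial K\cap\partial B_2^n$ lies in $\partial K^{\circ}\cap\partial B_2^n$ in both regimes), and the sufficiency direction can be salvaged via the inequality $\vol_n(\mathcal E)\,\vol_n(\mathcal E^{\circ})\ge\vol_n(B_2^n)^2$ valid whenever $0\in\operatorname{int}\mathcal E$ (equality only for centred $\mathcal E$): a circumscribed ellipsoid of $K$ with volume below $\vol_n(B_2^n)$ would polarize to an inscribed ellipsoid of $K^{\circ}$ with volume above it, contradicting the John half. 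But in the necessity direction the same inequality points the wrong way: an ellipsoid $\mathcal E\subset K^{\circ}$ with $\vol_n(\mathcal E)>\vol_n(B_2^n)$ has polar $\mathcal E^{\circ}\supset K$ whose volume is only bounded \emph{below} by the volume product, so you cannot conclude that $B_2^n$ fails to be the minimal circumscribed ellipsoid of $K$, i.e.\ you cannot deduce ``$B_2^n$ is the John ellipsoid of $K^{\circ}$'' from ``$B_2^n$ is the L\"owner ellipsoid of $K$'' this way (and deducing it from the characterization itself would be circular). The simplest repair is to run the corrected separation--perturbation argument directly on the circumscribed side, shrinking $B_2^n$ to a circumscribed ellipsoid of smaller volume; it is the mirror image of the John case.
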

\begin{lemma}
\label{BodyProd}
Let $D\subset\mathbb{R}^n$ and $K\subset\mathbb{R}^m$. Then centroid, centers of John and L\"owner ellipsoids and Santal\'o point of convex body $D\times K\subset\mathbb{R}^n\times\mathbb{R}^m$ satisfy identity
\begin{align*}
g(D\times K)&=(g(D),g(K)),\\
j(D\times K)&=(j(D),j(K)),\\
l(D\times K)&=(l(D),l(K))
\end{align*}
and
$$
s(D\times K)=(s(D),s(K)).
$$
\end{lemma}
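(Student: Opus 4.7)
My plan is to reduce each of the four identities by affine invariance and then apply the appropriate tool: Fubini for the centroid, the characterisation of John and L\"owner ellipsoids from Theorem \ref{JohnTH} for the ellipsoid centres, and a direct computation of the polar volume for the Santal\'o point.

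The centroid identity is immediate from Fubini: I split $\int_{D\times K}(x,y)\,dxdy$ into its first $n$ and last $m$ coordinate components and divide by $\vol_n(D)\vol_m(K)$, obtaining $(g(D),g(K))$. For the John centre, I would first affinely normalise so that $J(D)=B_2^n$ and $J(K)=B_2^m$, hence $j(D)=j(K)=0$; Theorem \ref{JohnTH} then supplies contact points $\{u_i\}\subset\partial D\cap\partial B_2^n$ and $\{v_j\}\subset\partial K\cap\partial B_2^m$ with positive weights $c_i,d_j$. Because $B_2^n\subset D$ forces $\|x\|\geq1$ on $\partial D$ (and similarly for $K$), the contact set $\partial(D\times K)\cap\partial B_2^{n+m}$ consists of precisely the points $(u_i,0)$ and $(0,v_j)$; using the same weights $c_i,d_j$, the two John conditions for $B_2^{n+m}\subset D\times K$ decouple into the original John conditions for $D$ and for $K$, so Theorem \ref{JohnTH} yields $B_2^{n+m}=J(D\times K)$ with centre $0=(j(D),j(K))$.

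The L\"owner case is harder since $L(D\times K)$ is no longer a round ball; the main obstacle is guessing the right candidate ellipsoid and tracking its normalising constants. After reducing to $L(D)=B_2^n$, $L(K)=B_2^m$, taking the trace of $\sum c_iu_iu_i^T=I_n$ and $\sum d_jv_jv_j^T=I_m$ gives $\sum c_i=n$ and $\sum d_j=m$. A short optimisation among diagonal ellipsoids $\{(x,y):\alpha\|x\|^2+\beta\|y\|^2\leq1\}\supset D\times K$ suggests the candidate $\{(x,y):n\|x\|^2+m\|y\|^2\leq n+m\}$; the diagonal affine map sending this ellipsoid onto $B_2^{n+m}$ turns its contact set with (the image of) $D\times K$ into the full product $\{(\sqrt{n/(n+m)}\,u_i,\sqrt{m/(n+m)}\,v_j)\}_{i,j}$. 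Picking weights $e_{ij}=\frac{n+m}{nm}c_id_j$, the balance condition follows from $\sum c_iu_i=0$, $\sum d_jv_j=0$; in the quadratic identity the cross term vanishes for the same reason and the two diagonal terms collapse to $\|x\|^2$ and $\|y\|^2$ precisely because of $\sum c_i=n$, $\sum d_j=m$. Theorem \ref{JohnTH} then identifies the candidate as $L(D\times K)$, whose centre is the origin.

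For the Santal\'o point, writing $h_{D-a}(u)=\max_{x\in D}\langle x-a,u\rangle$ and similarly for $K$, one has $(D\times K)^{(a,b)}=\{(u,v):h_{D-a}(u)+h_{K-b}(v)\leq1\}$. Slicing in $v$ at fixed $u$ — the section is a rescaling of $K^b$ by $1-h_{D-a}(u)$ when this factor is positive and empty otherwise — and applying the radial formula $\int_{\mathbb{R}^n}F(h_{D-a}(u))\,du=n\vol_n(D^a)\int_0^\infty r^{n-1}F(r)\,dr$ with $F(r)=(1-r)_+^m$ together with a Beta integral yields
\begin{align*}
\vol_{n+m}((D\times K)^{(a,b)})=\frac{n!\,m!}{(n+m)!}\vol_n(D^a)\vol_m(K^b).
\end{align*}
This product factorises in $a$ and $b$, so minimising over $(a,b)\in D\times K$ is achieved precisely at $(s(D),s(K))$.
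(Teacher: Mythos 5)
Your proposal is correct. For the centroid (Fubini) and for the John and L\"owner centres you follow essentially the paper's route: normalise each factor, take the contact points and weights supplied by Theorem \ref{JohnTH}, and verify the balance and decomposition-of-identity conditions for the product --- the points $(u_i,0),(0,v_j)$ with unchanged weights in the John case, the product points with product weights transported by a diagonal map in the L\"owner case. Your bookkeeping via $\sum c_i=n$, $\sum d_j=m$ makes the L\"owner candidate explicit, $\{(x,y):n\|x\|^2+m\|y\|^2\le n+m\}$, and with the weights $\frac{n+m}{nm}c_id_j$ the rescaled contact points do satisfy John's conditions exactly; the paper leaves $\sum a_i,\sum b_j$ unevaluated, and its displayed ellipsoid and map have the two scale factors interchanged, which is harmless there because only the centre (the origin) is used, but your version is the consistent one. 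Two small touch-ups: you only need that $(u_i,0),(0,v_j)$ \emph{belong} to $\partial(D\times K)\cap\partial B_2^{n+m}$ --- the full contact set is $\bigl((\partial D\cap S^{n-1})\times\{0\}\bigr)\cup\bigl(\{0\}\times(\partial K\cap S^{m-1})\bigr)$, generally larger than your finite families --- and you should record the one-line inclusions $B_2^{n+m}\subset D\times K$ and $D\times K\subset\{n\|x\|^2+m\|y\|^2\le n+m\}$ before invoking the theorem. The Santal\'o point is where you genuinely diverge: the paper quotes that $x=s(K)$ iff $0$ is the centroid of $(K-x)^\circ$ and that $(D\times K)^\circ=\conv((D^\circ,0),(0,K^\circ))$, and then kills that centroid by Fubini; you instead slice the polar directly and obtain
$$
\vol_{n+m}\bigl((D\times K)^{(a,b)}\bigr)=\frac{n!\,m!}{(n+m)!}\vol_n(D^a)\vol_m(K^b),
$$
so the minimisation over $(a,b)$ factorises and is attained at $(s(D),s(K))$. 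Both arguments are valid; yours is self-contained (no appeal to the two quoted facts) and gives a quantitative product formula for the polar volume, while the paper's reduction avoids the Beta-integral computation at the cost of invoking those facts.
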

\begin{proof}
Identity for centroid follows from Fubini's theorem.
\medbreak
Let us prove the statement for the center of the John ellipsoid $D\times K$.

Suppose that bodies $D$ and $K$ are in John's position. Lets consider for convex bodies $D$ and $K$  corresponding vectors $\{u_i\}_{i=1}^k\subset\partial D\cap\partial B^n_2$ and $\{w_j\}_{j=1}^s\subset\partial K\cap\partial B^m_2$ and positive numbers $\{c_i\}_{i=1}^k$ and $\{d_j\}_{j=1}^s$ satisfying conditions from theorem \ref{JohnTH}.  
Then from definition follows that sets of vectors $\{(u_i,0)\}_{i=1}^k\cup\{(0,w_j)\}_{j=1}^s\subset\partial(D\times K)$ and positive numbers $\{c_i\}_{i=1}^k\cup\{d_j\}_{j=1}^s$ satisfying all conditions from theorem \ref{JohnTH} for convex bodies $D\times K$. Therefore the ball $B^{n+m}_2$ is the John ellipsoid. Thus
$$
j(D\times K)=(0,0)=(j(D),j(K)).
$$

General case of the statement follows from affine invariance of L\"owner ellipsoid
\medbreak
Let us turn to the proof of the statement for the center of the L\"owner ellipsoid of the convex body $ D\times K.$

By the same argument as in the statement of center of John ellipsoid it is enough to consider the case then L\"owner ellipsoids of convex bodies are the balls $B^n_2$ and $B^m_2$ respectively. Then from theorem \ref{JohnTH} there exist vectors $\{v_i\}_{i=1}^k\subset\partial D\cap\partial B^n_2$ and $\{z_j\}_{j=1}^s\subset\partial K\cap\partial B^m_2$  and positive numbers $\{a_i\}_{i=1}^k$ and $\{b_j\}_{j=1}^s$ for convex bodies $D$ and $K$ respectively.
Consider the sets of vectors $\{(v_i,z_j)\}_{i=1,j=1}^{k,s}\subset\partial D\times\partial K\subset\partial(D\times K)$ and positive numbers $\{\lambda a_ib_j\}_{i=1,j=1}^{k,s}$, where $\lambda$ is positive number which we define below. So we have
\begin{align}
\label{formula1}
\sum_{i=1}^k\sum_{j=1}^s\lambda a_ib_j(v_i,z_j)=\lambda\left(\sum_{j=1}^sb_j\sum_{i=1}^ka_iv_i,\sum_{i=1}^ka_i\sum_{j=1}^sb_jz_j\right)=0
\end{align}
and
\begin{align}
\label{formula2}
\sum_{i=1}^k\sum_{j=1}^s\lambda a_ib_j\langle(x,y),(v_i,z_j)\rangle^2=\lambda\sum_{j=1}^sb_j\|x\|^2+\lambda\sum_{i=1}^ka_i\|y\|^2
\end{align}
for all $(x,y)\in\mathbb{R}^n\times\mathbb{R}^m$.

Define $(n+m)$-dimensional ellipsoid
$$
\mathcal{E}=\left\{(x,y)\in\mathbb{R}^n\times\mathbb{R}^m : \lambda\|x\|^2\left(\sum_{j=1}^sb_j\right)+\lambda\|y\|^2\left(\sum_{i=1}^ka_i\right)\leq1\right\},
$$
where $\lambda$ is chosen so that $\partial B_2^n\times\partial B^m_2\subset\partial\mathcal{E}$. In other words,
\begin{align*}
\lambda&:=\frac1{\sum_{j=1}^sb_j+\sum_{i=1}^ka_i}.
\end{align*}
We note that $\mathcal{E}$ is affine image of $B^{n+m}_2$ by diagonal matrix
\begin{align*}
T(x,y)=\left(\frac{x}{\sqrt{\lambda\sum_{j=1}^sb_j}},\frac{y}{\sqrt{\lambda\sum_{i=1}^ka_i}}\right).
\end{align*}
From identities \eqref{formula1} and \eqref{formula2} and theorem \ref{JohnTH} follows that the ball $B^{n+m}_2$ is L\"owner ellipsoid of convex body $T(D\times K)$. Since L\"owner ellipsoid is affine invariant, ellipsoid $\mathcal{E}=T^{-1}B^{n+m}_2$ is L\"owner ellipsoid for $D\times K.$ Thus we have
$$
l(D\times K)=(0,0)=(l(D),l(K)).
$$
\medbreak
To proof the last statement about Santal\'o point we need the next well-known facts \cite{Shcneider}:

{\bf Fact 1.} The interioir point $x$ of convex body $K$ is Santal\'o point if and only if $0$ is centroid of $(K-x)^\circ$.

{\bf Fact 2.} For any $n$-dimensilonal convex bodies $K$ and $L$ the polar of cartesian product $(K\times L)^\circ$ is $\conv((K^\circ,0),(0,L^\circ)).$

It is enough to consider the case $s(K)=0$ and $s(L)=0$. By the fact 1, we need to prove that from conditions for centroids $g(K^\circ)=0$ and $g(L^\circ)=0$ follows the identity $g(\conv((K^\circ,0),(0,L^\circ)))=0$.
Using Fubini's theorem we have
\begin{multline*}
g(\conv((K^\circ,0),(0,L^\circ)))=\qquad\qquad\qquad\qquad\qquad\qquad\\
=\frac1{\vol_{2n}(\conv((K^\circ,0),(0,L^\circ)))}\left(\int_{\conv((K^\circ,0),(0,L^\circ))}z\,dz\,dw,\int_{\conv((K^\circ,0),(0,L^\circ))}w\,dz\,dw\right)=\qquad\\
\quad=\frac1{\vol_{2n}(\conv((K^\circ,0),(0,L^\circ)))}\left(\int_{L^\circ}\int_{(1-\|w\|_{L^\circ})K^\circ}z\,dz\,dw,\int_{K^\circ}\int_{(1-\|z\|_{K^\circ})L^\circ}w\,dw\,dz\right)=(0,0).
\end{multline*}
\end{proof}
\medbreak

\medbreak
\section{Preliminary constructions of convex bodies for measures of symmetry $\varphi_{g,j}$ and $\varphi_{g,l}$}
In this section for the measure $\varphi_{g,j}$ (respectively $\varphi_{g,l}$) we shall consider preliminary convex bodies $F^{(1)}_n$ and $F^{(2)}_n\in\mathcal{K}_n$ (respectively $W^{(1)}_n$ and $W^{(2)}_n\in\mathcal{K}_n$) with some properties.

Centroid of convex body $F^{(1)}_n$ (respectively $W^{(1)}_n$) is "separated" from its boundary with increasing $n$, whereas its center of John ellipsoid (respectively L\"owner ellipsoid) is "close" to the boundary. Centroid and center of John ellipsoid (respectively L\"owner ellipsoid) of convex body  $F^{(2)}_n$ (respectively $W^{(2)}_n$) will have the opposite properties.

Lets consider the convex bodies
\begin{align*}
F^{(1)}_n&=\conv\left(\left(B^n_\infty,0\right),\left(B^n_2,1\right)\right),\\
F^{(2)}_n&=\conv\left(\left(\sqrt{\frac{n}{2e\pi}}B^n_2,0\right),\left(\frac12B^n_\infty,1\right)\right),\\
W^{(1)}_n&=\conv\left(\left(B^n_2,0\right),\left(\frac1{\sqrt{n}}B^n_\infty,1\right)\right)\\
\text{and}\qquad\qquad\qquad\qquad\qquad\qquad&\qquad\qquad\qquad\qquad\qquad\qquad\qquad\qquad\qquad\qquad\qquad\qquad\qquad
\end{align*}
\begin{align*}
W^{(2)}_n&=\conv\left(\left(\frac{n}eB^n_1,0\right),\left(B^n_\infty,1\right)\right).\\
\end{align*}

We note that each of convex bodies above is invariant by nontrivial rotation around an axis in the direction of the vector $e_{n+1}=(0,\ldots,0,1)$. Hence centroids and the centers of L\"owner and John ellipsoids of convex bodies $F^{(1)}_n$, $F^{(2)}_n,$ $W^{(1)}_n$ and $W^{(2)}_n$ have form
\begin{align}
\label{CentroidLJ}
&g(F_n^{(i)})=g^{n+1}(F_n^{(i)})e_{n+1}\mbox{ and }j(F_n^{(i)})=j^{n+1}(F_n^{(i)})e_{n+1},\\
&g(W_n^{(i)})=g^{n+1}(W_n^{(i)})e_{n+1}\mbox{ and } l(W_n^{(i)})=l^{n+1}(W_n^{(i)})e_{n+1},
\end{align}
where $g^{n+1}$, $l^{n+1}$ and $j^{n+1}$ are corresponding $(n+1)$-th coordinates of points for the convex bodies $F_n^{(i)}$ and $W_n^{(i)}$ for $i=1,2.$

Now we ready to prove the next four lemmas.
\begin{lemma}
\label{lemmaCentroidJohn}
Let $F_n^{(1)}$ be convex bodies defined by
$
F^{(1)}_n=\conv\left(\left(B^n_\infty,0\right),\left(B^n_2,1\right)\right).
$ Then
$$
j(F_n^{(1)})=\frac12e_{n+1}\quad\text{and}\quad g(F_n^{(1)})=g^{n+1}(F_n^{(1)})e_{n+1},
$$
where $g^{n+1}(F_n^{(1)})=\frac1n\left(1+\frac{\pi}2+\frac{\displaystyle e^{-\frac{\pi}{4}}}{\displaystyle\erf\left(\sqrt\pi/2\right)+1}\right)+o\left(\frac1n\right).$ Here $\erf(x)=\frac2{\sqrt\pi}\int_0^xe^{-t^2}\,dt$ is an error function (see e.g. \cite{Bender} \S16.2).
\end{lemma}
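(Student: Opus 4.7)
By construction $F_n^{(1)}$ is invariant under every rotation of $\mathbb{R}^{n+1}$ that fixes the $e_{n+1}$-axis, so both $j(F_n^{(1)})$ and $g(F_n^{(1)})$ lie on this axis and only the $(n+1)$-st coordinate needs to be identified. The plan is thus to split the argument into a geometric part (locating $J(F_n^{(1)})$ by John's theorem) and a computational part (evaluating $g^{n+1}$ through Lemma \ref{LemmaMixedVol} together with Lemma \ref{PajorTH}).

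For the John ellipsoid, the candidate is
$$
E=\bigl\{(x,t)\in\mathbb{R}^n\times\mathbb{R}\,:\,\|x\|^2+4(t-\tfrac12)^2\leq1\bigr\},
$$
with semi-axes $(1,\ldots,1,\tfrac12)$ and center $\tfrac12 e_{n+1}$. First I would verify $E\subset F_n^{(1)}$ slicewise: at height $t\in[0,1]$ the section of $E$ is the Euclidean ball of radius $2\sqrt{t(1-t)}\leq 1$, while the section of $F_n^{(1)}$ equals $(1-t)B^n_\infty+tB^n_2\supset B^n_2$. Then I would apply the affine map $T(x,t)=(x,2t-1)$, which carries $E$ to $B^{n+1}_2$ and $F_n^{(1)}$ to $K'=\conv\bigl((B^n_\infty,-1),(B^n_2,1)\bigr)$, and invoke Theorem \ref{JohnTH} with contact points $\{(\pm e_i,0)\}_{i=1}^n\cup\{(0,\pm1)\}\subset\partial K'\cap\partial B^{n+1}_2$ and weights $c_i=\tfrac12$: both $\sum c_iu_i=0$ and $\sum c_i\langle x,u_i\rangle^2=\|x\|^2$ then hold trivially by orthogonality, so $B^{n+1}_2=J(K')$ and consequently $E=J(F_n^{(1)})$, yielding $j(F_n^{(1)})=\tfrac12 e_{n+1}$.

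For the centroid I would apply Lemma \ref{LemmaMixedVol} with $K=B^n_\infty$, $L=B^n_2$, $c=1$ and use the symmetry $V_{n-k,k}(B^n_\infty,B^n_2)=V_{k,n-k}(B^n_2,B^n_\infty)$ together with \eqref{MixVolForm} to obtain $V_{n-k,k}(B^n_\infty,B^n_2)=2^{n-k}\vol_k(B^k_2)$. Setting $x=\sqrt{\pi}/2$ and dividing by $2^n$, both the numerator and denominator in \eqref{CentroidForm} become partial sums of
$$
S(x)=\sum_{k=0}^{\infty}\frac{x^k}{\Gamma(1+k/2)},\qquad S(x)+xS'(x)=\sum_{k=0}^{\infty}\frac{(k+1)x^k}{\Gamma(1+k/2)},
$$
with exponentially small tails. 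A term-by-term check gives the ODE $S'(x)-2xS(x)=2/\sqrt{\pi}$ with $S(0)=1$, whose solution is $S(x)=e^{x^2}(1+\erf(x))$. Evaluating at $x=\sqrt{\pi}/2$ yields $S=e^{\pi/4}(1+\erf(\sqrt{\pi}/2))$ and $xS'(x)=(\pi/2)S+1$, so the limit ratio is
$$
1+\frac{\pi}{2}+\frac{1}{S}=1+\frac{\pi}{2}+\frac{e^{-\pi/4}}{1+\erf(\sqrt{\pi}/2)}.
$$
Combining with $\frac{1}{n+2}=\frac{1}{n}+o\!\left(\frac{1}{n}\right)$ produces the asymptotic formula claimed for $g^{n+1}(F_n^{(1)})$.

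The main obstacle is the centroid computation: the mixed volumes themselves fall out of Pajor's lemma, but recognizing the relevant generating function and summing it in closed form via the error-function ODE is where the real work lies. By comparison the identification of the John ellipsoid is routine once the right affine normalization is applied, because the contact-point configuration $\{(\pm e_i,0),(0,\pm 1)\}$ already has the perfectly isotropic structure required by Theorem \ref{JohnTH}.
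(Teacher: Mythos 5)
Your proof is correct. The centroid half is essentially the paper's own computation: the paper likewise reduces \eqref{CentroidForm} via Lemma \ref{PajorTH} to a ratio of partial sums of $\sum_k x^k/\Gamma(1+k/2)$ (its generating function $f$ satisfies $f(x)=S(x/2)$), derives the same first-order ODE, solves it as $f(x)=\left(\erf(x/2)+1\right)e^{x^2/4}$, and evaluates at $x=\sqrt{\pi}$ — identical in substance to your $S(x)=e^{x^2}(1+\erf(x))$ evaluated at $\sqrt{\pi}/2$, giving the same limit $1+\tfrac{\pi}{2}+e^{-\pi/4}/(1+\erf(\sqrt{\pi}/2))$. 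The John half is where you genuinely diverge. The paper exhibits no contact points: it shows slicewise that $J(F_n^{(1)})\subset\conv\left(\left(B^n_2,0\right),\left(B^n_2,1\right)\right)\subset F_n^{(1)}$, argues by volume maximality and uniqueness that $J(F_n^{(1)})$ is the John ellipsoid of that cylinder, and then locates the center at height $\tfrac12$ because a centrally symmetric body has a unique affine invariant point. Your route — normalizing by $T(x,t)=(x,2t-1)$ and verifying Theorem \ref{JohnTH} for $K'=\conv\left(\left(B^n_\infty,-1\right),\left(B^n_2,1\right)\right)$ with contact points $\{(\pm e_i,0)\}_{i=1}^n\cup\{(0,\pm1)\}$ and weights $\tfrac12$ — also checks out: these points do lie in $\partial K'\cap\partial B^{n+1}_2$ (the support function of $K'$ in the directions $(\pm e_i,0)$ equals $1$), the resolution of the identity and the barycenter condition hold, and $B^{n+1}_2\subset K'$ follows from your slicewise inclusion transported by $T$. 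Your argument buys a bit more — it identifies $J(F_n^{(1)})$ exactly as the ellipsoid with semi-axes $(1,\ldots,1,\tfrac12)$ centered at $\tfrac12 e_{n+1}$, not merely its center — at the cost of having to guess the ellipsoid in advance, whereas the paper's cylinder-comparison argument avoids any contact-point computation.
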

\begin{proof}
By remark \eqref{CentroidLJ} it is enough to prove the statement for corresponding $(n+1)$-th coordinates of the points.

We first prove the statement about $j^{n+1}(F_n^{(1)})=\frac12$. Let $J$ be the John ellipsoid of $F_n^{(1)}$. Since John ellipsoid is unique, the ellipsoid $J$ satisfy
$$
J=\left\{(x,t)\in\mathbb{R}^n\times\mathbb{R} : \frac{\|x\|^2}{a^2}+\frac{(t-j^{n+1}(F_n^{(1)}))^2}{b^2}\leq1\right\},
$$
where $a$ and $b$ are positive numbers dependent on $n$.
 For $t\in[0;1]$ the section $J(t)=\{x\in\mathbb{R}^n : (x,t)\in J\}$ is $n$-dimensional euclidean ball contained in John ellipsoid of $F_n^{(1)}$. Since John ellipsoid for $B^n_\infty$ is equal $B^n_2,$ we have inclusions $J(0)\subset B^n_2$ and $J(1)\subset B^n_2$. Consequently,
$$
J\subset\conv\left(\left(B^n_2,0\right),\left(B^n_2,1\right)\right)\subset F_n^{(1)}=\conv\left(\left(B^n_\infty,0\right),\left(B^n_2,1\right)\right).
$$
From maximality of volume of John ellipsoid $J$ follow that $J$ is also Jonh ellipsoid for cylinder $\conv\left(\left(B^n_2,0\right),\left(B^n_2,1\right)\right)$. By the symmetry about point $(0,\ldots,0,\frac12)$ of convex body $\conv\left(\left(B^n_2,0\right),\left(B^n_2,1\right)\right),$ this cylinder has unique affine invariant point. Consequently, this point coincides with the center of ellipsoid $J$. Hence $j^{n+1}(F_n^{(1)})=\frac12$.
\medbreak
Now prove the asymptotic for  $g^{n+1}(F_n^{(1)})$. Applying formula \eqref{CentroidForm} for convex body $F_n^{(1)}$ and computations \eqref{BallVoll} and \eqref{MixVolForm} we get
\begin{align*}
g^{n+1}(F_n^{(1)})&=\frac1{n+2}\frac{\displaystyle\sum_{k=0}^n(k+1)V_{n-k,k}\big(B^n_\infty, B^n_2\big)}{\displaystyle\sum_{k=0}^nV_{n-k,k}\big(B^n_\infty, B^n_2\big)}=\frac1{n+2}\frac{\displaystyle\sum_{k=0}^n(k+1)2^{-k}\vol_k(B^k_2)}{\displaystyle\sum_{k=0}^n2^{-k}\vol_k(B^k_2)}=\\
&=\frac1{n+2}+\frac1{n+2}\frac{\displaystyle\sum_{k=0}^nk\left(\frac{\sqrt{\pi}}{2}\right)^k\frac1{\Gamma\left(\frac{k}2+1\right)}}{\displaystyle\sum_{k=0}^n\left(\frac{\sqrt{\pi}}{2}\right)^k\frac1{\Gamma\left(\frac{k}2+1\right)}}.
\end{align*}
Denote by $f$ the function
$$
f(x)=\sum_{k=0}^\infty\left(\frac{x}2\right)^k\frac1{\Gamma\left(\frac{k}2+1\right)}.
$$
By the equality
$$
\lim_{n\to\infty}\frac{\displaystyle\sum_{k=0}^nk\left(\frac{\sqrt{\pi}}{2}\right)^k\frac1{\Gamma\left(\frac{k}2+1\right)}}{\displaystyle\sum_{k=0}^n\left(\frac{\sqrt{\pi}}{2}\right)^k\frac1{\Gamma\left(\frac{k}2+1\right)}}=\frac{\displaystyle\sqrt{\pi}f'\left(\sqrt{\pi}\right)}{f\left(\sqrt{\pi}\right)},
$$
it is enough to find explicit formula for $f$. We shall seek $f$ as the solution of certain differential equation with initial condition $f(0)=1$. From identities 
$$
\sum_{k=0}^nk\left(\frac{x}2\right)^k\frac1{\Gamma\left(\frac{k}2+1\right)}=x\sum_{k=1}^n\left(\frac{x}2\right)^{k-1}\frac1{\Gamma\left(\frac{k}2\right)}=x\left(\frac1{\sqrt{\pi}}+\frac{x}2\left(\sum_{k=0}^{n-2}\left(\frac{x}2\right)^k\frac1{\Gamma\left(\frac{k}2+1\right)}\right)\right)
$$
we get differential equation for $f$
$$
xf'(x)=x\left(\frac1{\sqrt{\pi}}+\frac{x}2f(x)\right).
$$
Solving this equation, we have 
$$
f(x)=\left(\frac1{\sqrt{\pi}}\int_0^xe^{-\frac{t^2}{4}}\,dt+1\right)e^{\frac{x^2}{4}}=\left(\erf\left(\frac{x}2\right)+1\right)e^{\frac{x^2}4}.
$$
Hence
\begin{align*}
\lim_{n\to\infty}\frac{\displaystyle\sum_{k=0}^nk\left(\frac{\sqrt{\pi}}{2}\right)^k\frac1{\Gamma\left(\frac{k}2+1\right)}}{\displaystyle\sum_{k=0}^n\left(\frac{\sqrt{\pi}}{2}\right)^k\frac1{\Gamma\left(\frac{k}2+1\right)}}=\frac{\displaystyle\sqrt{\pi}f'\left(\sqrt{\pi}\right)}{f\left(\sqrt{\pi}\right)}=\frac{\pi}2+\frac{\displaystyle e^{-\frac{\pi}{4}}}{\displaystyle\erf\left({\sqrt\pi}/2\right)+1}.
\end{align*}
\end{proof}


\begin{lemma}
\label{lemmaJohnCentroid}
Let $F^{(2)}_n$ be convex body defined by 
$F^{(2)}_n=\conv\left(\left(\sqrt{\frac{n}{2e\pi}}B^n_2,0\right),\left(\frac12B^n_\infty,1\right)\right).$ Then
\begin{align*}
j(F^{(2)}_n)=j^{n+1}(F^{(2)}_n)e_{n+1}\quad\text{and}\quad
g(F^{(2)}_n)=g^{n+1}(F^{(2)}_n)e_{n+1}.
\end{align*}
where
$j^{n+1}(F^{(2)}_n)=\frac1n+o\left(\frac1n\right),$ and $g^{n+1}(F_n^{(2)})=1-\frac1e-\frac1n\left(1-\frac2e\right)+o\left(\frac1n\right).$
\end{lemma}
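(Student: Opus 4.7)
The plan is first to reduce to $(n+1)$-th coordinates: $j(F_n^{(2)})=j^{n+1}(F_n^{(2)})e_{n+1}$ and $g(F_n^{(2)})=g^{n+1}(F_n^{(2)})e_{n+1}$ are immediate from the rotational symmetry of $F_n^{(2)}$ about the $e_{n+1}$-axis already recorded in \eqref{CentroidLJ}.

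For the John ellipsoid, uniqueness combined with the hyperoctahedral symmetry of $F_n^{(2)}$ (permutations and sign-changes of the first $n$ coordinates) forces it into the form $J=\{(x,t):\|x\|^2/a^2+(t-h)^2/b^2\leq 1\}$. Writing $r_0=\sqrt{n/(2e\pi)}$, the cross-section $F_n^{(2)}(t)=(1-t)r_0 B_2^n+(t/2)B_\infty^n$ has inradius $(1-t)r_0+t/2$, since the support function of the Minkowski sum is the sum of support functions and the minimum over unit directions is attained along any coordinate axis. Therefore $J\subset F_n^{(2)}$ is equivalent to $a\sqrt{1-(t-h)^2/b^2}\leq(1-t)r_0+t/2$ for $t\in[h-b,h+b]$, together with $h\geq b$ and $h+b\leq 1$. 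The standard tangency condition for a line touching an ellipse yields the closed-form relation $a^2+(r_0-1/2)^2 b^2=(r_0-(r_0-1/2)h)^2$; since its right-hand side is decreasing in $h$, maximality of $a^n b$ forces $h=b$ (the ellipsoid touches the base). Substituting gives $a^2=r_0^2-2r_0(r_0-1/2)b$, and maximizing $a^n b$ in $b$ alone produces $b=r_0/((r_0-1/2)(n+2))$. Since $r_0\to\infty$, this simplifies to $j^{n+1}(F_n^{(2)})=h=b=1/n+o(1/n)$.

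For the centroid I apply Lemma~\ref{LemmaMixedVol} with $K=\sqrt{n/(2e\pi)}B_2^n$, $L=\frac{1}{2}B_\infty^n$, and $c=1$. By \eqref{MixVolForm}, setting $m=n-k$ and $y=\sqrt{n/(2e)}$, the mixed volumes simplify to $a_m:=y^m/\Gamma(m/2+1)$, and the lemma becomes
$$g^{n+1}(F_n^{(2)})=\frac{(n+1)-T_n/S_n}{n+2},\qquad S_n:=\sum_{m=0}^n a_m,\qquad T_n:=\sum_{m=0}^n m\,a_m=y\,\partial_y S_n.$$
Stirling shows $a_m$ peaks at $m\approx n/e\in(0,n)$; for $m\geq n$ the ratio $a_{m+1}/a_m\sim\sqrt{n/(em)}\leq e^{-1/2}$, so both $\sum_{m>n}a_m$ and $\sum_{m>n}m\,a_m$ are super-exponentially smaller than $S_n$, $T_n$. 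Hence, with exponentially small error, both sums may be replaced by their full-series counterparts. Using the function $f(x)=\sum_{k\geq 0}(x/2)^k/\Gamma(k/2+1)=(\erf(x/2)+1)e^{x^2/4}$ from Lemma~\ref{lemmaCentroidJohn} and the identity $f'(x)=1/\sqrt{\pi}+(x/2)f(x)$, one has $\sum_{m\geq 0}a_m=f(2y)$ and $y\,\partial_y f(2y)=2yf'(2y)=2y/\sqrt{\pi}+2y^2 f(2y)$. Since $2y^2=n/e$ and $2y/(\sqrt{\pi}f(2y))=O(ye^{-y^2})$ is exponentially small, $T_n/S_n=n/e+o(1)$. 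Expanding $1/(n+2)=1/n+O(1/n^2)$ then gives
$$g^{n+1}(F_n^{(2)})=\frac{n(1-1/e)+1+o(1)}{n+2}=\left(1-\frac{1}{e}\right)+\frac{2/e-1}{n+2}+o\!\left(\frac{1}{n}\right)=1-\frac{1}{e}-\frac{1-2/e}{n}+o\!\left(\frac{1}{n}\right).$$

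The main obstacle is the tail control in the centroid calculation: Stirling must be invoked carefully to show the super-exponential smallness of $\sum_{m>n}a_m$ relative to the peak around $m=n/e$, as this is exactly what validates replacing the partial sum $S_n$ by the closed-form value $f(2y)$. The John-ellipsoid step, by contrast, reduces cleanly to a one-dimensional optimization once symmetry has pinned down the shape of the ellipsoid.
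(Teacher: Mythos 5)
Your proof is correct and follows essentially the same route as the paper: the symmetry reduction to an ellipsoid of revolution, the inradius bound $a\sqrt{1-(t-c)^2/b^2}\le (1-t)r_n+\tfrac t2$ on sections, the tangency relation forcing $c=b$ and then $b=\frac{r_n}{(r_n-\frac12)(n+2)}=\frac1n+o(\frac1n)$, and the centroid via Lemma~\ref{LemmaMixedVol} reduced to the ratio $\sum_m m\,a_m/\sum_m a_m\to n/e$. The only (harmless) difference is that you evaluate this ratio by passing to the full series $f(2y)=(\erf(y)+1)e^{y^2}$ and its differential equation with exponentially small tail corrections, whereas the paper manipulates the finite recurrence directly and bounds $B_n$ from below by a single term; the two computations are equivalent.
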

\begin{proof}
\medbreak
Let $J$ be the John ellipsoid of convex body $F_n^{(2)}$. From its uniqueness follows $J$ has form
\begin{align}
\label{elDec2}
\mathcal{E}_{a,b,c}=\left\{(x,t)\in\mathbb{R}^{n+1}=\mathbb{R}^n\times\mathbb{R}: \frac{\|x\|^2}{a^2}+\frac{(t-c)^2}{b^2}\leq1\right\}
\end{align}
for some $a>0,  b>0$ and $c\in[0 ;1]$.

Denote $\sqrt{\frac{n}{2e\pi}}$ by $r_n.$ Note that John ellipsoids of convex bodies $r_nB^n_1$ and $\frac12B^n_\infty$ respectively equal $r_nB^n_2$ and $\frac12B^n_2$. For any $t\in[0;1]$ from lemma \ref{wellknown} follows $J(t)\subset r_n(1-t)B^n_2+\frac12tB^n_2=\left(r_n-t(r_n-\frac12)\right)B^n_2$. Therefore parameters $a,b$ and $c$ satisfy inequality
$$
a^2\left(1-\frac{(t-c)^2}{b^2}\right)\leq\left(r_n-t(r_n-\frac12)\right)^2
$$ 
for any $t\in[0;1].$ Since John ellipsoid has maximal volume, exists $t\in[0;1]$ for which this inequality turns into equality. This condition is equivalent to equality
$$
a^2=\left(r_n-c\left(r_n-\frac12\right)\right)^2-\left(r_n-\frac12\right)^2b^2.
$$

Consider the function of square of the volume 
$$
f(b,c)=\vol_{n+1}^2(\mathcal{E}_{a,b,c})=b^2a^{2n}=b^2\left(\left(r_n+c\left(r_n-\frac12\right)\right)^2-\left(r_n-\frac12\right)^2b^2\right)^n.
$$
John ellipsoid $J$ maximize this function. Since $f(b,c)$ is decreasing by parameter $c$ on $[0;1]$ with restriction $b\leq\min\{c,1-c\}$, maximum is attained in $b=c$. Thus $j^{n+1}(W_n^{(2)})$ maximize the function $f(c,c)=c^2\left(r_n^2-2r_n(r_n-\frac12)c\right)^n$ where $c\in[0;1]$. Hence we have
$$
j^{n+1}(F_n^{(2)})=\frac{r_n}{(r_n-\frac12)(n+2)}=\frac{\sqrt{\frac{n}{2e\pi}}}{(\sqrt{\frac{n}{2e\pi}}-\frac12)(n+2)}=\frac1n+o\left(\frac1n\right).
$$
\medbreak

The statement about centroid follows from the same computations as in Appendix A from~\cite{MMCSEW}. These arguments are given here for completeness.

 From lemma \ref{LemmaMixedVol} follows
\begin{align}
g^{n+1}(F^{(2)}_n)&=\frac{1}{n+2}\frac{\sum_{k=0}^n(k+1)V_{n-k,k}\big(\sqrt{\frac{n}{2e\pi}}B^n_2, \frac12B^n_\infty\big)}{\sum_{k=0}^nV_{n-k,k}\big(\sqrt{\frac{n}{2e\pi}}B^n_2, \frac12B^n_\infty\big)}=\notag\\
&=\frac{1}{n+2}\frac{\displaystyle\sum_{k=0}^n(k+1)\frac1{\left(\frac{n}{2e}\right)^\frac{k}2\Gamma\left(1+\frac{n-k}2\right)}}{\displaystyle\sum_{k=0}^n\frac1{\left(\frac{n}{2e}\right)^\frac{k}2\Gamma\left(1+\frac{n-k}2\right)}}=\notag\\
\label{CentrW_n}
&=\frac{n+1}{n+2}-\frac{1}{n+2}\frac{\displaystyle\sum_{k=0}^n\frac{k\left(\frac{n}{2e}\right)^\frac{k}{2}}{\Gamma\left(1+\frac{k}2\right)}}{\displaystyle\sum_{k=0}^n\frac{\left(\frac{n}{2e}\right)^\frac{k}2}{\Gamma\left(1+\frac{k}2\right)}}.
\end{align}
For every $x\geq0,$
\begin{multline*}
\sum_{k=0}^n\frac{kx^k}{\Gamma\left(1+\frac{k}2\right)}=\sum_{k=1}^n\frac{kx^k}{\frac{k}2\Gamma\left(\frac{k}2\right)}=2x\sum_{k=1}^n\frac{x^{k-1}}{\Gamma\left(\frac{k}2\right)}=\\
=2x\left(\frac1{\Gamma\left(\frac12\right)}+\sum_{k=2}^n\frac{x^{k-1}}{\Gamma\left(\frac{k}2\right)}\right)=2x\left(\frac1{\Gamma\left(\frac12\right)}+x\sum_{k=0}^{n-2}\frac{x^{k}}{\Gamma\left(1+\frac{k}2\right)}\right).
\end{multline*}
Substituting $x=\sqrt{\frac{n}{2e}}$ and denoting by
$$
A_n=\sum_{k=0}^n\frac{k\left(\frac{n}{2e}\right)^\frac{k}{2}}{\Gamma\left(1+\frac{k}2\right)}\quad\text{and}\quad B_n= \sum_{k=0}^n\frac{\left(\frac{n}{2e}\right)^\frac{k}2}{\Gamma\left(1+\frac{k}2\right)}
$$
it follows
\begin{multline*}
A_n=2\sqrt{\frac{n}{2e}}\left(\frac1{\Gamma\left(\frac12\right)}+\sqrt{\frac{n}{2e}}\sum_{k=0}^{n-2}\frac{\left(\frac{n}{2e}\right)^{\frac{k}2}}{\Gamma\left(1+\frac{k}2\right)}\right)=\\
=2\sqrt{\frac{n}{2e}}\left(\frac1{\Gamma\left(\frac12\right)}+\sqrt{\frac{n}{2e}}\left(B_n-\frac{\left(\frac{n}{2e}\right)^\frac{n-1}2}{\Gamma(1+\frac{n-1}2)}-\frac{\left(\frac{n}{2e}\right)^\frac{n}2}{\Gamma(1+\frac{n}2)}\right)\right).
\end{multline*}
Consequently,
\begin{multline*}
g^{n+1}(F^{(2)}_n)=\frac{n+1}{n+2}-\frac1{n+2}\frac{A_n}{B_n}=\\
=\frac{n+1}{n+2}-\frac{2\sqrt{\frac{n}{2e}}}{n+2}\left(\frac1{\Gamma\left(\frac12\right)B_n}+\sqrt{\frac{n}{2e}}\left(1-\frac{\frac{\left(\frac{n}{2e}\right)^\frac{n-1}2}{\Gamma(1+\frac{n-1}2)}+\frac{\left(\frac{n}{2e}\right)^\frac{n}2}{\Gamma(1+\frac{n}2)}}{B_n}\right)\right).
\end{multline*}
Taking into account that
$$
B_n\geq\frac{\left(\frac{n}{2e}\right)^2}{\Gamma(3)}
$$
for $n\geq4,$ we get
$$
g^{n+1}(F^{(2)}_n)=1-\frac1e-\frac1n\left(1-\frac2e\right)+o\left(\frac1n\right).
$$
\end{proof}

\begin{lemma}
\label{lemmaCentroidLowner}
Let $W^{(1)}_n$ be convex bodies defined by 
$
W^{(1)}_n=\conv\left(\left(B^n_2,0\right),\left(\frac1{\sqrt{n}}B^n_\infty,1\right)\right).
$
Then
\begin{align*}
l(W^{(1)}_n)=\frac12e_{n+1}\quad\text{and}\quad
g(W^{(1)}_n)=g^{n+1}(W^{(1)}_n)e_{n+1},
\end{align*}
where $g^{n+1}(W^{(1)}_n)=\frac1n\frac1{1-\sqrt{\frac2\pi}}+o\left(\frac1n\right).$
\end{lemma}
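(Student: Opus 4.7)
The plan has two parts that parallel Lemmas \ref{lemmaCentroidJohn} and \ref{lemmaJohnCentroid}: first I locate $l(W_n^{(1)})$ by a sandwich argument, and then I compute $g^{n+1}(W_n^{(1)})$ via Lemma \ref{LemmaMixedVol} together with an asymptotic analysis of a sum dominated by geometric decay. Throughout, the remark \eqref{CentroidLJ} reduces both claims to their $(n+1)$-th coordinates.

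For the Löwner ellipsoid, let $L$ be the Löwner ellipsoid of $W_n^{(1)}$. By uniqueness, $L$ inherits the rotational symmetry about the $e_{n+1}$-axis from $W_n^{(1)}$, so every horizontal slice $L(t) = \{x\in\mathbb{R}^n : (x,t)\in L\}$ is a Euclidean ball. Since $L \supset B_2^n \times \{0\}$, the ball $L(0)$ contains $B_2^n$ and has radius at least $1$; and since $L \supset \tfrac{1}{\sqrt n}B_\infty^n \times \{1\}$, the ball $L(1)$ contains $\tfrac{1}{\sqrt n}B_\infty^n$, whose circumradius equals $1$, so $L(1) \supset B_2^n$ as well. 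Convexity of $L$ then forces $L \supset \conv\bigl((B_2^n,0),(B_2^n,1)\bigr)$, the cylinder over $B_2^n$. Letting $L'$ be the Löwner ellipsoid of that cylinder, minimality of $L'$ gives $\vol_{n+1}(L')\leq\vol_{n+1}(L)$, while the inclusion $L'\supset W_n^{(1)}$ and minimality of $L$ give the reverse inequality; hence $\vol_{n+1}(L) = \vol_{n+1}(L')$ and $L = L'$ by uniqueness of the Löwner ellipsoid. Central symmetry of the cylinder about $\tfrac{1}{2}e_{n+1}$ then yields $l(W_n^{(1)}) = \tfrac{1}{2}e_{n+1}$.

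For the centroid, combining Lemma \ref{LemmaMixedVol} with \eqref{MixVolForm} (for $\lambda=1$, $\mu=1/\sqrt n$) gives
\begin{equation*}
g^{n+1}(W_n^{(1)}) = \frac{1}{n+2}\cdot\frac{\sum_{k=0}^n (k+1)\,T_k}{\sum_{k=0}^n T_k}, \qquad T_k := \left(\tfrac{2}{\sqrt n}\right)^k \vol_{n-k}(B_2^{n-k}).
\end{equation*}
Normalising by $T_0=\vol_n(B_2^n)$, the ratio $\widetilde T_k := T_k/T_0$ equals $(4/(n\pi))^{k/2}\Gamma(1+n/2)/\Gamma(1+(n-k)/2)$; the standard asymptotic $\Gamma(x+a)/\Gamma(x)\sim x^a$ shows $\widetilde T_k \to r^k$ pointwise in $k$ as $n\to\infty$, where $r := \sqrt{2/\pi} < 1$. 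A Gautschi-type estimate for the gamma ratio then bounds $\widetilde T_{k+1}/\widetilde T_k$ uniformly, yielding a dominating geometric bound $\widetilde T_k \leq \rho^k$ with some $\rho<1$ independent of $n$; dominated convergence transfers the pointwise limit to the normalised sums, giving
\begin{equation*}
\frac{\sum_k k\,\widetilde T_k}{\sum_k \widetilde T_k} \longrightarrow \frac{r}{1-r}.
\end{equation*}
Consequently $g^{n+1}(W_n^{(1)}) = \frac{1}{n+2}\bigl(1+\tfrac{r}{1-r}\bigr) + o(1/n) = \frac{1}{n\bigl(1-\sqrt{2/\pi}\bigr)} + o(1/n)$.

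The main obstacle is the uniform tail bound on $\widetilde T_k$: the pointwise convergence $\widetilde T_k\to r^k$ by itself does not permit swapping the limit with the ratio of sums, and the usual route is a Gautschi-type inequality controlling $\Gamma(m/2+1)/\Gamma((m+1)/2)$ uniformly in $k$ and $n$. Once that estimate is in hand, the remaining manipulations are parallel to the mixed-volume computations already carried out in the proofs of Lemmas \ref{lemmaCentroidJohn} and \ref{lemmaJohnCentroid}.
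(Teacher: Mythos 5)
Your argument is correct, and its second half takes a somewhat different route than the paper. The L\"owner half is essentially the paper's proof: both sandwich $W^{(1)}_n$ between the cylinder $\conv\left(\left(B^n_2,0\right),\left(B^n_2,1\right)\right)$ and its circumscribed ellipsoid and conclude from minimality, uniqueness and the central symmetry of the cylinder that $l^{n+1}(W^{(1)}_n)=\frac12$ (like the paper, you assert rotational symmetry of $L$ about the $e_{n+1}$-axis; strictly the body has only the discrete cube symmetries, but these act irreducibly on $e_{n+1}^{\perp}$ and so force the horizontal sections of $L$ to be balls, which is the fact the paper also uses tacitly). For the centroid you arrive at the same reduction, via Lemma \ref{LemmaMixedVol} and \eqref{MixVolForm}, to the ratio $\sum_k k\,T_k\big/\sum_k T_k$, but you justify its limit differently: the paper extracts the common factor $\frac1{\sqrt{\pi n}}\left(\frac{\pi e}{2}\right)^{n/2}$ by a quantitative Stirling estimate, splitting the sum at $k=n^{1/4}$ to dispose of the tail, whereas you normalize by $T_0$ and control $\widetilde T_{k+1}/\widetilde T_k=\frac{2}{\sqrt{\pi n}}\,\Gamma\!\left(\tfrac{n-k}2+1\right)/\Gamma\!\left(\tfrac{n-k+1}2\right)$ by Gautschi's inequality, which gives $\widetilde T_{k+1}/\widetilde T_k\leq\sqrt{2(n+2)/(\pi n)}\leq\rho<1$ for all $k$ once $n$ is moderately large, hence the uniform geometric domination $\widetilde T_k\leq\rho^k$ and dominated convergence of both normalised sums. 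So the ``obstacle'' you flag is closed exactly by the estimate you name; your version trades the paper's explicit tail-splitting for a cleaner summable dominating sequence, and both computations land on the same geometric-series ratio $\frac{r}{1-r}$ with $r=\sqrt{2/\pi}$, giving $g^{n+1}(W^{(1)}_n)=\frac1n\frac1{1-\sqrt{2/\pi}}+o\left(\frac1n\right)$ as claimed.
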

\begin{proof}
We shall show that $l^{n+1}(W^{(1)}_n)=\frac12$. Denote by $L$ the L\"owner ellipsoid of convex body $W^{(1)}_n$. Since L\"ovner ellipsoid is unique, ellipsoid $L$ has the form
$$
L=\left\{(x,t)\in\mathbb{R}^n\times\mathbb{R} : \frac{\|x\|^2}{a^2}+\frac{(t-l^{n+1}(W^{(1)}_n))^2}{b^2}\leq1\right\},
$$
where $a$ and $b$ are some positive numbers dependent on $n$.
For $t\in[0;1]$ section $L(t)=\{x\in\mathbb{R}^n : (x,t)\in L\}$ is $n$-dimensional  euclidian ball containing L\"owner ellipsoid of the section $W_n^{(1)}(t)$. Since L\"owner ellipsoid for $\frac1{\sqrt{n}}B^n_\infty$ is the ball $B^n_2$ we have the inclusions $B^n_2\subset L(0)$ and $B^n_2\subset L(1)$. Consequently,
$$
W^{(1)}_n=\conv\left(\left(B^n_2,0\right),\left(\frac1{\sqrt{n}}B^n_\infty,1\right)\right)\subset\conv\left(\left(B^n_2,0\right),\left(B^n_2,1\right)\right)\subset L.
$$
From minimality of L\"owner ellipsoid $L$ follows that $L$ is also L\"owner ellipsoid for cylinder $\conv\left(\left(B^n_2,0\right),\left(B^n_2,1\right)\right)$. Since this cylinder is symmetric about the point $(0,\ldots,0,\frac12),$ this body has unique affine invariant point. Therefore, this point is also center of ellipsoid $L$. Hence $l^{n+1}(W^{(1)}_n)=\frac12$.
\medbreak
Find the asymptotic for $g^{n+1}(W^{(1)}_n)$. From lemma \ref{LemmaMixedVol} follows
\begin{align*}
g^{n+1}(W^{(1)}_n)&=\frac{1}{n+2}\frac{\sum_{k=0}^n(k+1)V_{n-k,k}\big(B^n_2, \frac1{\sqrt{n}}B^n_\infty\big)}{\sum_{k=0}^nV_{n-k,k}\big(B^n_2, \frac1{\sqrt{n}}B^n_\infty\big)}=\qquad\qquad\\
&=\frac{1}{n+2}\frac{\displaystyle\sum_{k=0}^n(k+1)\left(\frac2{\sqrt{n}}\right)^k\frac{\pi^{\frac{n-k}2}}{\Gamma\left(1+\frac{n-k}2\right)}}{\displaystyle\sum_{k=0}^n\left(\frac2{\sqrt{n}}\right)^k\frac{\pi^{\frac{n-k}2}}{\Gamma\left(1+\frac{n-k}2\right)}}=
\end{align*}
\begin{align*}
\qquad\quad=\frac1{n+2}+\frac{1}{n+2}\frac{\displaystyle\sum_{k=0}^nk\left(\frac{\pi n}4\right)^\frac{n-k}2\frac{1}{\Gamma\left(1+\frac{n-k}2\right)}}{\displaystyle\sum_{k=0}^n\left(\frac{\pi n}4\right)^\frac{n-k}2\frac{1}{\Gamma\left(1+\frac{n-k}2\right)}}.
\end{align*}
 Now we shall compute asymptotic for sums 
\begin{align}
\label{Den43}
\sum_{k=0}^n\left(\frac{\pi n}4\right)^\frac{n-k}2\frac{1}{\Gamma\left(1+\frac{n-k}2\right)}
\end{align}
and
\begin{align}
\label{Num43}
\sum_{k=0}^nk\left(\frac{\pi n}4\right)^\frac{n-k}2\frac{1}{\Gamma\left(1+\frac{n-k}2\right)}.
\end{align}
Each term of both sums are less than
$$
\left(\frac{\pi n}4\right)^\frac{n}2\frac{1}{\Gamma\left(1+\frac{n}2\right)}\sim\frac1{\sqrt{\pi n}}\left(\frac{\pi e}{2}\right)^\frac{n}2.
$$
The sum \eqref{Den43} can be presented in the next form
\begin{align*}
\sum_{k=0}^n\left(\frac{\pi n}4\right)^\frac{n-k}2\frac{1}{\Gamma\left(1+\frac{n-k}2\right)}=\sum_{0\leq k<n^{1/4}}\left(\frac{\pi n}4\right)^\frac{n-k}2\frac{1}{\Gamma\left(1+\frac{n-k}2\right)}+\sum_{n^{1/4}\leq k\leq n}\left(\frac{\pi n}4\right)^\frac{n-k}2\frac{1}{\Gamma\left(1+\frac{n-k}2\right)}.
\end{align*}
The second sum is $o\left(n^{-\frac32}\left(\frac{\pi e}{2}\right)^\frac{n}2\right)$ since
\begin{multline*}
\sum_{n^{1/4}\leq k\leq n}\left(\frac{\pi n}4\right)^\frac{n-k}2\frac{1}{\Gamma\left(1+\frac{n-k}2\right)}\leq n\left(\frac{\pi n}4\right)^\frac{n-n^{1/4}}2\frac{1}{\Gamma\left(1+\frac{n-n^{1/4}}2\right)}\leq\\ 
\leq Cn\left(\frac{\pi n}4\right)^\frac{n-n^{1/4}}2e^\frac{n-n^{1/4}}2\left(\frac{n-n^{1/4}}2\right)^{-\frac{n-n^{1/4}+1}2}\leq Cn^{-3/2}\left(\frac{\pi e}{2}\right)^\frac{n}2n^2\left(\frac2\pi\right)^\frac{n^{1/4}}2.
\end{multline*}

To compute asymptotic of $\sum_{0\leq k\leq n^{1/4}}\ldots$ we shall use well-known precision of Stirling's formula (\cite{Bender} \S12.33)
$
\Gamma(x+1)=x^xe^{-x}\sqrt{2\pi x}e^{\theta(x)},
$
where $0\leq\theta(x)\leq\frac1x$. Remark that from this precision for $n-n^{1/4}<x<n$ follows equivalence
$
\Gamma(x+1)=x^xe^{-x}\sqrt{2\pi x}(1+o(1)),
$
where $o(1)$ is uniformly bounded by $x$. Hence for $0\leq k\leq n^{1/4}$ we get
\begin{multline*}
\left(\frac{\pi n}4\right)^\frac{n-k}2\frac{1}{\Gamma\left(1+\frac{n-k}2\right)}=\left(\frac{\pi n}4\right)^\frac{n-k}2\left(\frac{n-k}2\right)^{-\frac{n-k}2}e^{\frac{n-k}2}\frac{(1+o(1))}{\sqrt{\pi (n-k)}}=\\
=\frac1{\sqrt{\pi n}}\left(\frac{\pi e}2\right)^\frac{n}2\left(\frac2\pi\right)^\frac{k}2(1+o(1)).
\end{multline*}
Consequently,
\begin{multline*}
\sum_{k=0}^{n^{1/4}}\left(\frac{\pi n}4\right)^\frac{n-k}2\frac{1}{\Gamma\left(1+\frac{n-k}2\right)}=\frac1{\sqrt{\pi n}}\left(\frac{\pi e}2\right)^\frac{n}2\sum_{k=0}^{n^{1/4}}\left(\frac{2}{\pi}\right)^\frac{k}2\left(1+o(1)\right)=\\
=\frac1{\sqrt{\pi n}}\left(\frac{\pi e}2\right)^\frac{n}2\left(\frac1{1-\sqrt{\frac2\pi}}+o(1)\right).
\end{multline*}
The asymptotic for \eqref{Num43} is computed in the similar way
\begin{multline*}
\sum_{k=0}^{n^{1/4}}k\left(\frac{\pi n}4\right)^\frac{n-k}2\frac{1}{\Gamma\left(1+\frac{n-k}2\right)}=\frac1{\sqrt{\pi n}}\left(\frac{\pi e}2\right)^\frac{n}2\sum_{k=0}^{n^{1/4}}\left(\frac{2}{\pi}\right)^\frac{k}2\left(k+o(k)\right)=\\
=\frac1{\sqrt{\pi n}}\left(\frac{\pi e}2\right)^\frac{n}2\left(\frac{\sqrt{\frac2\pi}}{\left(1-\sqrt{\frac2\pi}\right)^2}+o(1)\right).
\end{multline*}
Hence we have
\begin{multline*}
g^{n+1}(W^{(1)}_n)=\frac1{n+2}+\frac{1}{n+2}\frac{\displaystyle\sum_{k=0}^nk\left(\frac{\pi n}4\right)^\frac{n-k}2\frac{1}{\Gamma\left(1+\frac{n-k}2\right)}}{\displaystyle\sum_{k=0}^n\left(\frac{\pi n}4\right)^\frac{n-k}2\frac{1}{\Gamma\left(1+\frac{n-k}2\right)}}=\\
=\left(\frac1n+o\left(\frac1n\right)\right)\left(1+\frac{\sqrt{\frac2\pi}}{1-\sqrt{\frac2\pi}}+o\left(1\right)\right)=\frac1n\frac1{1-\sqrt{\frac2\pi}}+o\left(\frac1n\right).
\end{multline*}
\end{proof}


\begin{lemma}
\label{lemmaLownerCentroid}
Let $W^{(2)}_n$ be convex body, defined by $W^{(2)}_n=\conv\left(\left(\frac{n}eB^n_1,0\right),\left(B^n_\infty,1\right)\right).$ Then
\begin{align*}
l(W^{(2)}_n)=l^{n+1}(W^{(2)}_n)e_{n+1}\quad\text{and}\quad
g(W^{(2)}_n)=g^{n+1}(W^{(2)}_n)e_{n+1}.
\end{align*}
where
$l^{n+1}(W^{(2)}_n)=\frac1n+o\left(\frac1n\right),$ and $g^{n+1}(W_n^{(2)})=1-\frac1e-\frac1n\left(1-\frac2e\right)+o\left(\frac1n\right)$.
\end{lemma}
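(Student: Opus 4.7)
The plan is to mirror the structure of Lemma~\ref{lemmaJohnCentroid}, adapting it for the L\"owner ellipsoid of $W^{(2)}_n$. First, I would observe that $W^{(2)}_n$ is invariant under the hyperoctahedral group acting on the first $n$ coordinates (sign flips and coordinate permutations), since both $\tfrac{n}{e}B^n_1$ and $B^n_\infty$ are. By affine invariance, $g(W^{(2)}_n)$ and $l(W^{(2)}_n)$ are fixed by this group and hence lie on the $e_{n+1}$-axis; the same symmetry forces the L\"owner ellipsoid to have the form $L = \mathcal{E}_{a,b,c} = \{(x,t) : \|x\|^2/a^2 + (t-c)^2/b^2 \leq 1\}$ for some $a, b > 0$ and $c \in [0,1]$.

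To determine $c = l^{n+1}(W^{(2)}_n)$, I would express the containment $L \supset W^{(2)}_n$ slice by slice. Since $W^{(2)}_n(t) = (1-t)\tfrac{n}{e}B^n_1 + tB^n_\infty$ is centrally symmetric, this reduces to $\phi(t) := R(t)^2/a^2 + (t-c)^2/b^2 \leq 1$ for all $t \in [0,1]$, where $R(t)$ is the circumradius of the slice. A direct computation on the extreme points of the Minkowski sum yields $R(t)^2 = ((1-t)n/e + t)^2 + (n-1)t^2$, a convex quadratic in $t$; consequently $\phi$ is convex, and $\phi \leq 1$ on $[0,1]$ reduces to the two endpoint conditions
$$(n/e)^2/a^2 + c^2/b^2 \leq 1, \qquad n/a^2 + (1-c)^2/b^2 \leq 1.$$
Minimizing $\vol(L) \propto a^n b$ makes both constraints tight, and the KKT stationarity in $c$ gives $\mu_0 c = \mu_1 (1-c)$; combining with stationarity in $a$ and $b$ produces the auxiliary relation $a^2/b^2 = n/(ce^2) + 1/(1-c)$. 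Substituting $c = \gamma/n$ into the resulting three-equation system and matching leading-order terms in $n$ forces $\gamma = 1 + o(1)$, so $l^{n+1}(W^{(2)}_n) = 1/n + o(1/n)$.

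For the centroid, I would apply Lemma~\ref{LemmaMixedVol} with $K = \tfrac{n}{e}B^n_1$ and $L = B^n_\infty$ and use \eqref{MixVolForm2} to obtain $V_{n-k,k}(\tfrac{n}{e}B^n_1, B^n_\infty) = 2^n(n/e)^{n-k}/(n-k)!$. Reindexing $j = n - k$ then gives
$$g^{n+1}(W^{(2)}_n) = \frac{n+1}{n+2} - \frac{1}{n+2}\cdot\frac{\sum_{j=0}^n j(n/e)^j/j!}{\sum_{j=0}^n (n/e)^j/j!}.$$
Since $\sum_{j=1}^n j(n/e)^j/j! = (n/e)\sum_{j=0}^{n-1}(n/e)^j/j!$ and both truncated series tend to $e^{n/e}$ with exponentially small tails past $j = n$ (by standard Poisson tail estimates, as $n/e < n$), the ratio equals $(n/e)(1 + o(1))$; substituting and expanding yields $g^{n+1}(W^{(2)}_n) = 1 - 1/e - (1 - 2/e)/n + o(1/n)$.

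The hard step will be the L\"owner analysis: correctly identifying the symmetric form of $L$, reducing the family of containment constraints to the two endpoint inequalities via convexity of $\phi$, and pushing the KKT system through to the asymptotic $c = 1/n + o(1/n)$. The centroid computation is routine once the mixed volumes are available, and closely parallels Appendix~A of \cite{MMCSEW} as invoked in Lemma~\ref{lemmaJohnCentroid}.
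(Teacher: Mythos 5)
Your proposal is correct and follows essentially the same route as the paper: the same rotationally symmetric ansatz $\mathcal{E}_{a,b,c}$, the same reduction of containment to the two end-slice circumradius conditions (circumradii $n/e$ and $\sqrt{n}$) made tight by minimality, and the same mixed-volume/truncated-exponential computation for the centroid; your KKT relation $a^2/b^2=n/(ce^2)+1/(1-c)$ combined with the difference of the two tight constraints yields exactly the paper's quadratic for $c$ (roots $\beta_n^{\pm}$), whereas the paper eliminates $a,b$ and minimizes the explicit one-variable volume function $f(c)$ --- the same optimization with different bookkeeping. One small precision point: for the centroid you ultimately need $\sum_{j\leq n} j(n/e)^j/j! \big/ \sum_{j\leq n}(n/e)^j/j! = \tfrac{n}{e}\left(1+o(1/n)\right)$, not merely $\tfrac{n}{e}(1+o(1))$, but your observation that the neglected tail is exponentially small already supplies this.
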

\begin{proof}

Let be $L$ L\"owner ellipsoid of convex body $W^{(2)}_n$. From uniqueness of L\"owner ellipsoid follows that $L$ has form
\begin{align}
\label{elDec2}
\mathcal{E}_{a,b,c}=\left\{(x,t)\in\mathbb{R}^{n+1}=\mathbb{R}^n\times\mathbb{R}: \frac{\|x\|^2}{a^2}+\frac{(t-c)^2}{b^2}\leq1\right\}
\end{align}
for some $a>0,  b>0$ and $c\in[0 ;1]$.

Denote $\frac{n}e$ by $r_n.$
From definition of L\"owner ellipsoid follows the inclusion $W_n^{(2)}(t)= (1-t)B^n_{\infty}+tr_nB^n_1\subset L(t)$ for any $t\in[0;1].$ On the other hand, for any ellipsoid $\mathcal{E}_{a,b,c}$ from inclusions $B^n_{\infty}\subset L(0)$ and $B^n_1\subset L(1)$ by convexity we get inclusions $W_n^{(2)}(t)= (1-t)B^n_{\infty}+tr_nB^n_1\subset \mathcal{E}_{a,b,c}(t)$ for any $t\in[0;1].$ Note that L\"owner ellipsoid for convex bodies $r_nB^n_1$ and $B^n_\infty$ 
are $r_nB^n_2$ and $\sqrt{n}B^n_2$. The sections $L(t)=\{x\in\mathbb{R}^n : (x,t)\in L\}$ of ellipsoid $L$ are $n$-dimensional euclidian balls with some scaling. So from minimality of L\"owner ellipsoid follows $L(0)=r_nB^n_2$, $L(1)=\sqrt{n}B^n_2$ and the next conditions for parameters $a,b$ and $c$
\begin{align*}
r_n^2=a^2\left(1-\frac{c^2}{b^2}\right)\quad
\text{and}\quad
n=a^2\left(1-\frac{(1-c)^2}{b^2}\right).
\end{align*}
Hence
\begin{align*}
b^2=\frac{r_n^2(1-c)^2-nc^2}{r_n^2-n}\quad
\text{and}\quad
a^2=\frac{r_n^2\left(1-c\right)^2-nc^2}{1-2c}.
\end{align*}
From the positivity of the left-hand sides follows necessary restriction $c\in(0;\frac12).$ 

We shall compute $l^{n+1}(W^{(2)}_n)$, as the minimum point of the square function of the volume
\begin{align*}
f(c)=\vol_{n+1}^2(\mathcal{E}_{a,b,c})&=b^2a^{2n}\vol_{n+1}^2(B^{n+1}_2)=\\
&=\frac{r_n^2(1-c)^2-nc^2}{r_n^2-n}\left(\frac{r_n^2\left(1-c\right)^2-nc^2}{1-2c}\right)^n\vol_{n+1}^2(B^{n+1}_2)
\end{align*}
for $c\in\left(0;\frac12\right)$.

Roots of the equation $f'(c)=0$ are
\begin{align*}
\beta_n^\pm=\frac{(n+3)r_n^2-(n+1)n\pm\sqrt{\big((n+3)r_n^2-(n+1)n\big)^2-4(n+2)\big(r_n^2-n\big)r^2_n}}{2(n+2)\big(r_n^2-n\big)}.
\end{align*}

Since $r_n=\frac{n}e$, for large enough $n$ the root $\beta_n^+$ do not belongs to the interval $(0;\frac12)$.
Consequently,
\begin{align*}
l^{n+1}(W^{(2)}_n)=\beta_n^-\frac{1+\frac3n-\frac{n}{r^2_n}+o\big(\frac1n\big)-\sqrt{1 +\frac2n-\frac{2n}{r^2_n}+o\big(\frac1n\big)}}{2\big(1+\frac2n\big)\big(1-\frac{n}{r^2_n}\big)}=\frac1n+o\left(\frac1n\right).
\end{align*}
\medbreak
Let us turn to the proof of the statement about centroid.
Applying formula \eqref{CentroidForm}  to convex body $W_n^{(2)}$ and using asymptotic \eqref{volBn1} we get
\begin{align*}
g^{n+1}(W_n^{(2)})&=\frac1{n+2}\frac{\displaystyle\sum_{k=0}^n(k+1)2^k\left(\frac{n}e\right)^{n-k}\vol_{n-k}(B^{n-k}_1)}{\displaystyle\sum_{k=0}^n2^k\left(\frac{n}e\right)^{n-k}\vol_{n-k}(B^{n-k}_1)}=\frac1{n+2}\frac{\displaystyle\sum_{k=0}^n\frac{n^{n-k}(k+1)}{e^{n-k}(n-k)!}}{\displaystyle\sum_{k=0}^n\frac{n^{n-k}}{e^{n-k}(n-k)!}}=\\
&=\frac1{n+2}\frac{\displaystyle\sum_{k=0}^n\frac{n^{k}(n-k+1)}{e^{k}k!}}{\displaystyle\sum_{k=0}^n\frac{n^{k}}{e^{k}k!}}= \frac{n+1}{n+2}-\frac1{n+2}\frac{\displaystyle\sum_{k=0}^nk\frac{n^k}{e^kk!}}{\displaystyle\sum_{k=0}^n\frac{n^k}{e^kk!}}.
\end{align*}
Therefore, it suffices to prove the asymptotic
\begin{align}
\label{lem65}
\frac1{n+2}\frac{\displaystyle\sum_{k=0}^nk\frac{n^k}{e^kk!}}{\displaystyle\sum_{k=0}^n\frac{n^k}{e^kk!}}=\frac1e-\frac2{ne}+o\left(\frac1n\right).
\end{align}
Note for any $x\in\mathbb{R}$
\begin{align*}
\sum_{k=0}^n\frac{kx^k}{k!}=\sum_{k=1}^n\frac{kx^k}{k(k-1)!}=x\sum_{k=1}^n\frac{x^{k-1}}{(k-1)!}=x\sum_{k=0}^{n-1}\frac{x^k}{k!}.
\end{align*}
Substituting $x=\frac{n}e$ and dividing both parts by $\sum_{k=0}^n\frac{n^k}{e^kk!}$, we have
\begin{align*}
\frac{\displaystyle\sum_{k=0}^n\frac{kn^k}{e^kk!}}{\displaystyle\sum_{k=0}^n\frac{n^k}{e^kk!}}=\frac{\displaystyle \frac{n}e\sum_{k=0}^{n-1}\frac{n^k}{e^kk!}}{\displaystyle\sum_{k=0}^n\frac{n^k}{e^kk!}}=\frac{n}e\left(1-\frac{\displaystyle\frac{n^n}{e^nn!}}{\displaystyle\sum_{k=0}^n\frac{n^k}{\displaystyle e^kk!}}\right)=\frac{n}{e}\left(1+o\left(\frac1n\right)\right),
\end{align*}
as by Stirling's formula numerator less than $1$ and denominator is more then $\frac{n^2}{2e^2}$. This concludes \eqref{lem65}.
\end{proof}
\medbreak
\section{Two preliminary constructions of convex bodies for measures of symmetry $\varphi_{j,l}$}
As in the previous  section we provide convex bodies $M^{(1)}_n$ and $M^{(2)}_n\in\mathcal{K}_n$ with the next properties: center of John ellipsoid of convex body $M^{(1)}_n$ is "close" to the boundary, whereas center of L\"owner ellipsoid is "separated" from the boundary; for $M^{(2)}_n$ corresponding centers have the opposite property.

Construction of $M^{(1)}_n$ is provided from \cite{MMCSEW}.
\begin{lemma}[M. Meyer, C. Schütt, E. M. Werner \cite{MMCSEW}]
\label{JohnLowner}
For convex body $$M^{(1)}_n=\conv((B^n_2,0),(\Delta_n,1))$$ we have
\begin{align*}
l(M^{(1)}_n)=\frac12e_{n+1}\quad\text{and}\quad
j(M^{(1)}_n)=j^{n+1}(M^{(1)}_n)e_{n+1},
\end{align*}
where $j^{n+1}(M^{(1)}_n)=\frac1n+o\left(\frac1n\right)$.
\end{lemma}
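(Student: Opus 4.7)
The plan is to follow the template of Lemmas~\ref{lemmaCentroidJohn} and~\ref{lemmaCentroidLowner}: reduce each ellipsoid to an axially symmetric one, then compute. The symmetry group $S_{n+1}$ of the regular simplex $\Delta_n$ acts irreducibly on $\mathbb{R}^n$ and preserves $M^{(1)}_n$; by uniqueness, both the John ellipsoid $J$ and the L\"owner ellipsoid $L$ are $S_{n+1}$-invariant, and hence have the form
$$\mathcal{E}_{a,b,c}=\{(x,t)\in\mathbb{R}^n\times\mathbb{R}:\|x\|^2/a^2+(t-c)^2/b^2\le 1\}$$
with center $(0,c)$ on the axis.

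For the L\"owner ellipsoid the argument mirrors that of Lemma~\ref{lemmaCentroidLowner}. Since $\Delta_n$ is inscribed in $B^n_2$, its vertices lie on $\partial B^n_2$; the centered ball $L(1)$ must then contain $B^n_2$, and clearly $L(0)\supset B^n_2$. Thus $L$ contains the cylinder $\conv((B^n_2,0),(B^n_2,1))\supset M^{(1)}_n$, so by minimality $L$ is also the L\"owner ellipsoid of this cylinder. Being centrally symmetric about $(0,\tfrac12)$, the cylinder forces $L$ to be centered there, giving $l(M^{(1)}_n)=\tfrac12 e_{n+1}$.

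For the John ellipsoid the key geometric observation is that the section $J(t)=\rho(t)B^n_2$, being a centered ball inside $M^{(1)}_n(t)=(1-t)B^n_2+t\Delta_n$, satisfies $\rho(t)\le(1-t)+t\cdot\min_{\xi}h_{\Delta_n}(\xi)=1-\alpha t$, where $\alpha:=1-\tfrac1n$ (the inradius of $\Delta_n$ at the origin is $1/n$). Among axially symmetric ellipsoids inscribed in the region $\{\|x\|\le 1-\alpha t,\ 0\le t\le 1\}$, I would look for the volume maximizer within the one-parameter family that touches the apex $(0,0)$, i.e.\ $c=b$, and is tangent to the slanted wall at a single height $t^*$. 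Matching the value and derivative of $\rho(t)^2$ and $(1-\alpha t)^2$ at $t^*$ reduces the problem to $a^2=1-2\alpha b$, and maximizing $(1-2\alpha b)^{n/2}b$ in $b$ gives
$$c=b=\frac{1}{(n+2)\alpha}=\frac{n}{(n+2)(n-1)}=\frac{1}{n}+o\!\left(\frac{1}{n}\right).$$

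The main obstacle is to confirm that this candidate is the John ellipsoid rather than merely a locally extremal inscribed ellipsoid; for that I would apply Theorem~\ref{JohnTH}. By symmetry, $\mathcal{E}_{a,b,b}$ is tangent to $\partial M^{(1)}_n$ at $(0,0)$ and at the $n+1$ symmetric points $(-(1-\alpha t^*)v_i,t^*)$ on the faces of $M^{(1)}_n$ opposite each vertex $v_i$ of $\Delta_n$. Pulling these $n+2$ tangent points back through the affine map $T(x,t)=(x/a,(t-c)/b)$ yields unit vectors on $\partial B^{n+1}_2$; using the simplex identities $\sum v_i=0$ and $\sum v_iv_i^T=\tfrac{n+1}{n}I_n$, the required John decomposition $\sum\lambda_iu_i=0$, $\sum\lambda_iu_iu_i^T=I_{n+1}$ collapses to two scalar conditions, solvable with positive weights precisely when $b=1/((n+2)\alpha)$. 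Uniqueness of the John ellipsoid then gives $j^{n+1}(M^{(1)}_n)=c=1/n+o(1/n)$, completing the lemma.
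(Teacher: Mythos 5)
Your argument is correct, and it is worth noting that the paper itself does not prove this lemma at all: it is imported verbatim from \cite{MMCSEW}, and only the companion Lemma \ref{LownerJohn} is proved here. Your L\"owner half is essentially the cylinder argument of Lemma \ref{lemmaCentroidLowner} (a centered ball containing $\Delta_n$ has radius at least its circumradius $1$, so $L$ contains $\conv((B^n_2,0),(B^n_2,1))\supset M^{(1)}_n$ and central symmetry pins the center at $\tfrac12 e_{n+1}$), and your John half follows the section-constraint-plus-volume-optimization scheme of Lemmas \ref{lemmaJohnCentroid} and \ref{LownerJohn}, since $J(t)\subset\bigl(1-\alpha t\bigr)B^n_2$ with $\alpha=1-\tfrac1n$ reduces everything to the frustum $\conv\bigl((B^n_2,0),(\tfrac1n B^n_2,1)\bigr)$. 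What you add beyond the paper's template is valuable on two counts: the Schur-type argument from irreducibility of the $S_{n+1}$-action is a cleaner justification of axial symmetry than the paper's appeal to ``rotations'' (which do not literally exist for the simplex factor), and the John-decomposition certificate removes the ansatz character of restricting to $b=c$, a step the paper's analogous lemmas leave informal. Your numbers check out: tangency to the slanted wall with $c=b$ gives $a^2=1-2\alpha b$, maximizing $b(1-2\alpha b)^{n/2}$ gives $b=\tfrac1{(n+2)\alpha}=\tfrac{n}{(n+2)(n-1)}$, the tangency height is $t^{*}=b\tfrac{n+2}{n+1}$, and with $\sum v_i=0$, $\sum v_iv_i^{T}=\tfrac{n+1}{n}I_n$ the decomposition $\lambda_0 u_0+\lambda\sum u_i=0$, $\lambda_0 u_0u_0^{T}+\lambda\sum u_iu_i^{T}=I_{n+1}$ requires exactly $(t^{*}-b)/b=\tfrac1{n+1}$, which this $b$ satisfies, so the sufficiency direction of Theorem \ref{JohnTH} certifies global optimality (you do not even need the ``precisely when'' uniqueness claim). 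Two cosmetic slips: $(0,0)$ is the center of the bottom face, not an apex, and it would be worth stating explicitly that $j^{n+1}(M^{(1)}_n)=\tfrac{n}{(n+2)(n-1)}=\tfrac1n+o\bigl(\tfrac1n\bigr)$, which is the asserted asymptotics.
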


By the same argument as in \cite{MMCSEW} the second convex body $M^{(2)}_n$ can be constructed in the following way. 
\begin{lemma}
\label{LownerJohn}
For convex body $M^{(2)}_n=\conv\left((\Delta_n,0),\left(\frac1nB_2^n,1\right)\right)$ we have
\begin{align*}
j(M^{(2)}_n)=\frac12e_{n+1}\quad\text{and}\quad
l(M^{(2)}_n)=l^{n+1}(M^{(2)}_n)e_{n+1},
\end{align*}
where $l^{n+1}(M^{(2)}_n)=\frac1n+o\left(\frac1n\right)$.
\end{lemma}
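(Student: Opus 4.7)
The plan is to mirror the argument of Lemma~\ref{JohnLowner} with the roles of John and L\"owner interchanged, and then to extract $l^{n+1}(M^{(2)}_n)$ by the same style of one-parameter volume optimization as in Lemma~\ref{lemmaLownerCentroid}. The first step is a symmetry reduction: the full symmetric group $S_{n+1}$ of $\Delta_n$ acts irreducibly on $\mathbb{R}^n$ and fixes $\frac{1}{n}B_2^n$, hence it acts as a group of affine symmetries of $M^{(2)}_n$. By the uniqueness of the John and L\"owner ellipsoids, each inherits this symmetry and must therefore take the form
\[
\mathcal{E}_{a,b,c} = \left\{(x,t)\in\mathbb{R}^n\times\mathbb{R} : \frac{\|x\|^2}{a^2} + \frac{(t-c)^2}{b^2} \leq 1\right\}
\]
with center on the $e_{n+1}$-axis. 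This already accounts for the ``on-axis'' part of both claims.

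For the John center, the section $M^{(2)}_n(t) = (1-t)\Delta_n + (t/n)B_2^n$ has support function $(1-t)h_{\Delta_n}(u) + t/n$ on $S^{n-1}$, so its inradius from the origin is $(1-t)\cdot\frac{1}{n} + \frac{t}{n} = \frac{1}{n}$, uniformly in $t$. Consequently every inscribed ellipsoid of the form $\mathcal{E}_{a,b,c}$ satisfies $\mathcal{E}(t)\subset\frac{1}{n}B_2^n$ for all $t\in[0,1]$, and so $\mathcal{E}_{a,b,c}\subset\frac{1}{n}B_2^n\times[0,1]$. The reverse inclusion $\frac{1}{n}B_2^n\times[0,1]\subset M^{(2)}_n$ follows from $\frac{1}{n}B_2^n\subset\Delta_n$. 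By maximality of volume the John ellipsoid of $M^{(2)}_n$ coincides with that of the cylinder, which is centered at $(0,\frac{1}{2})$ by the central symmetry of the cylinder, so $j^{n+1}(M^{(2)}_n) = \frac{1}{2}$.

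For the L\"owner center, the dual observation is that the circumradius of $M^{(2)}_n(t)$ from the origin equals $(1-t)+t/n$, attained in the direction of any vertex of $\Delta_n$. Requiring $\mathcal{E}(t)\supset M^{(2)}_n(t)$ thus becomes $a^2(1-(t-c)^2/b^2)\geq ((1-t)+t/n)^2$ on $[0,1]$. Imposing equality at the endpoints $t=0$ and $t=1$ pins down
\[
b^2 = \frac{n^2(1-c)^2-c^2}{n^2-1}, \qquad a^2 = \frac{n^2(1-c)^2-c^2}{n^2(1-2c)},
\]
and the intermediate constraints are automatic because the left-hand side is concave in $t$ while the right-hand side is convex, and they agree at the endpoints. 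Minimizing $\vol^2\propto a^{2n}b^2$ over $c\in(0,\frac{1}{2})$ reduces to a one-variable critical-point equation; writing $c=\alpha/n+o(1/n)$, the leading $O(n)$ contributions to the logarithmic derivative cancel and the next-order terms give $\alpha=1$. This yields $l^{n+1}(M^{(2)}_n)=\frac{1}{n}+o\!\left(\frac{1}{n}\right)$.

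The main obstacle is precisely this asymptotic: the two dominant pieces of the derivative of $\log(a^{2n}b^2)$ are each of order $n$ and cancel at leading order, so one has to keep subleading terms consistently to isolate $\alpha=1$. Everything else is a direct adaptation of the preceding lemmas of Section~4.
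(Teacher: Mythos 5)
Your proposal is correct and follows essentially the same route as the paper: the symmetry reduction to ellipsoids of revolution $\mathcal{E}_{a,b,c}$, the cylinder comparison $\frac1nB^n_2\times[0,1]$ for the John center (the paper optimizes $a,b,c$ directly here, but uses exactly your cylinder trick in Lemmas \ref{lemmaCentroidJohn} and \ref{lemmaCentroidLowner}), and the same endpoint conditions $a^2\left(1-\tfrac{c^2}{b^2}\right)=1$, $a^2\left(1-\tfrac{(1-c)^2}{b^2}\right)=\tfrac1{n^2}$ followed by minimizing $a^{2n}b^2$ over $c\in\left(0,\tfrac12\right)$. The only stylistic difference is at the end: the paper solves the quadratic critical-point equation exactly and discards the root $\alpha_n^+\notin\left(0,\tfrac12\right)$, whereas you use the ansatz $c=\alpha/n+o\left(\tfrac1n\right)$ (which strictly speaking presupposes the minimizer is of order $\tfrac1n$, easily justified from the critical-point equation); your cancellation indeed yields $\alpha=1$, matching the paper's $l^{n+1}(M^{(2)}_n)=\tfrac1n+o\left(\tfrac1n\right)$.
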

\begin{proof}
Firstly, we prove the statement about the center of John ellipsoid $j(M^{(2)}_n).$ Denote it by $J$.

We note convex body $M^{(2)}_n$ is invariant by some rotations around the axis directed along the vector $e_n=(0,\ldots,0,1)$. Since John ellipsoid is unique, $J$ has the form
\begin{align}
\label{elDec}
\mathcal{E}_{a,b,c}=\left\{(x,t)\in\mathbb{R}^{n+1}=\mathbb{R}^n\times\mathbb{R}: \frac{\|x\|^2}{a^2}+\frac{(t-c)^2}{b^2}\leq1\right\}
\end{align}
for some $a>0,  b>0$ and $c\in[0 ;1]$.

For $t\in[0,1]$ by lemma \ref{wellknown} John ellipsoid for sections $M^{(2)}_n(t)=(1-t)\Delta_n+t\frac1nB^n_2$ is equal 
$((1-t)\frac1n+t\frac1n)B_2^n$. From representation \eqref{elDec}  follows inclusions $J(t)\subset\frac1nB_2^n$. Thus we have necessary conditions for parameters $a,b,c$
\begin{align}
\label{conditionABC}
a\sqrt{1-\frac{(t-c)^2}{b^2}}\leq\frac1n.
\end{align}
valid for all $t\in[0;1].$
Since John ellipsoid has the maximum volume, exists $t\in[0;1]$ for which inequality \eqref{conditionABC} turns into equality.  It is easy to check, that identity attained in $t = c$. Hence $a=\frac1n$.  From inscription of John ellipsoid into $M^{(2)}_n$ follows the codition $b\leq \min(c,1-c)$. As $\vol_{n+1}\left(\mathcal{E}_{\frac1n,b,c}\right)=b\frac1{n^n}\vol_{n+1}\left(B_2^{n+1}\right),$ volume of ellipsoid $\mathcal{E}_{\frac1n,b,c}$ is maximized in $b=c=\frac12$.

On the other hand, ellipsoid $\mathcal{E}_{\frac1n,\frac12,\frac12}$ is contained in convex body $M^{(2)}_n,$ since
$$
\mathcal{E}_{\frac1n,\frac12,\frac12}\subset\left(\frac1nB^n_2\right)\times[0;1]\subset M^{(2)}_n.
$$
Consequently, by uniqueness of John ellipsoid, we have $ J=\mathcal{E}_{\frac1n,\frac12,\frac12}$ and $j^{n+1}(M^{(2)}_n)=\frac12.$
\medbreak
Lets turn to prove statement about center of L\"owner ellipsoid $L$ of convex body $M^{(2)}_n.$ By the same argument above the L\"owner ellipsoid has form \eqref{elDec}.

 From definition of $L$ follows inclusions for sections $M^{(2)}_n(t)=(1-t)\Delta_n+t\frac1nB^n_2\subset L(t)$ for $t\in[0;1]$. On the other hand, for any ellipsoid $\mathcal{E}_{a,b,c}$ from inclusions $\Delta_n\subset\mathcal{E}_{a,b,c}(0)$ and $\frac1nB^n_2\subset\mathcal{E}_{a,b,c}(1)$, by convexity of $M^{(2)}_n$, we have inclusion $M^{(2)}_n(t)=(1-t)\Delta_n+t\frac1nB^n_2\subset \mathcal{E}_{a,b,c}(t)$ for $t\in[0;1]$. Using this remark and minimality of L\"owner ellipsoid we get necessary conditions for parameters  $a,b$ and $c$
\begin{align*}
\frac1{n^2}=a^2\left(1-\frac{(1-c)^2}{b^2}\right)\quad\text{and}\quad1=a^2\left(1-\frac{c^2}{b^2}\right).
\end{align*}
Therefore
\begin{align*}
b^2=\frac{\left(1-\frac1{n^2}\right)c^2-2c+1}{1-\frac1{n^2}}\quad\text{and}\quad a^2=\frac{\left(1-\frac1{n^2}\right)c^2-2c+1}{1-2c}.
\end{align*}
Since the left-hand side is positive, we have
$c<\frac12.$

We shall find $l_{n+1}(M^{(2)}_n)$, as the solution of minimization problem for square volume
\begin{align*}
f(c)=\vol_{n+1}^2(\mathcal{E}_{a,b,c})&=b^2a^{2n}\vol_{n+1}^2(B^{n+1}_2)=\\
&=\frac{\left(1-\frac1{n^2}\right)c^2 -2c+1}{1-\frac1{n^2}}\left(\frac{\left(1-\frac1{n^2}\right)c^2-2c+1}{1-2c}\right)^n\vol_{n+1}^2(B^{n+1}_2)
\end{align*}
where $c\in\left(0;\frac12\right)$.

The roots of $f'(c)=0$ are 
\begin{align*}
\alpha^{\pm}_n=\frac{n^3+3n^2-n-1\pm\sqrt{(n^3+3n^2-n-1)^2-4n^2(n^2-1)(n+2)}}{2(n^2-1)(n+2)}.
\end{align*}
For large enough $n$ we have $\alpha^{+}_n\not\in\left(0;\frac12\right)$.
From uniqueness of L\"owner ellipsoid it follows $l_{n+1}(M^{(2)}_n)=\alpha^-_n$. Thus,
\begin{align*}
l_{n+1}(M^{(2)}_n)=\alpha^-_n&=\frac{1+\frac3n+o\left(\frac1n\right) -\sqrt{1+\frac2{n}+o\left(\frac1n\right)}}{2(1-\frac1{n^2})(1+\frac2n)}=\frac1n+o\left(\frac1n\right).
\end{align*}
\end{proof}
\medbreak
\section{Asymptotically sharpness of upper bounds for measures of symmetry}
In this section will be provided the approach of constructing the extremal convex bodies for considered  measures of symmetry.

We need the next geometrical remark, following from direct computations.
\begin{lemma}
\label{fool}
For two different interior points $(x^1,y^1),(x^2,y^2)\in[0;1]\times[0;1]$ denote by $s$ the line passing through this two points Then
$$
\frac{\|(x^1,y^1)-(x^2,y^2)\|}{\vol_1(s\cap[0;1]\times[0;1])}=\frac{|x^2-x^1|\,|y^2-y^1|}{|x^1y^2-x^2y^1|}.
$$
\end{lemma}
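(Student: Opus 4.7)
The plan is to parameterize the line and reduce to an elementary computation. Write $P(t) = (1-t)(x^1,y^1) + t(x^2,y^2)$, so that the two given points correspond to $t = 0$ and $t = 1$. Since $P$ moves at constant Euclidean speed $\|(x^2-x^1, y^2-y^1)\|$, the distance between the two points equals $\|(x^2-x^1, y^2-y^1)\|$, and the chord $s\cap ([0;1]\times[0;1])$, being the image of some parameter interval $[t_-,t_+]$, has length $(t_+-t_-)\,\|(x^2-x^1,y^2-y^1)\|$. The left-hand side therefore collapses to $1/(t_+-t_-)$, and the task reduces to computing $t_+-t_-$.

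The next task is to locate the two endpoints of the chord on the boundary of the square. The quantity $|x^1y^2 - x^2y^1|$ on the right is twice the area of the triangle with vertices $0,(x^1,y^1),(x^2,y^2)$; in particular it is not invariant under reflections of the square across its center, so the identity is tied specifically to the corner at the origin. The correct regime (and the one arising in Section 6) is when the chord meets the two coordinate sides $\{x=0\}$ and $\{y=0\}$. Solving $P(t_-)^2=0$ and $P(t_+)^1=0$ in that case yields
$$
t_- \;=\; \frac{y^1}{y^1 - y^2}, \qquad t_+ \;=\; \frac{x^1}{x^1 - x^2}.
$$

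From here the calculation is mechanical. Bringing to a common denominator,
$$
t_+ - t_- \;=\; \frac{x^1(y^1 - y^2) - y^1(x^1 - x^2)}{(x^1 - x^2)(y^1 - y^2)} \;=\; \frac{x^2 y^1 - x^1 y^2}{(x^1 - x^2)(y^1 - y^2)},
$$
and passing to absolute values and taking the reciprocal produces exactly the right-hand side of the identity.

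The only place requiring any thought is the geometric step of identifying which two sides of the square host the chord endpoints, together with the tacit understanding that the identity is designed for the configuration in which those are $\{x=0\}$ and $\{y=0\}$. After that, as the author's phrasing \emph{``following from direct computations''} already signals, everything is elementary algebra.
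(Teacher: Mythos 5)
Your computation is correct and is exactly the ``direct computation'' the paper leaves implicit (no written proof of this lemma is given): parameterize the segment affinely, find the parameters at which the line meets the coordinate sides, and invert the parameter gap $|t_+-t_-|=\frac{|x^1y^2-x^2y^1|}{|x^1-x^2|\,|y^1-y^2|}$. Your side remark is also well taken: as literally stated the lemma fails in general (for two points on a vertical segment the left-hand side is positive while the right-hand side vanishes, and a chord exiting through $\{x=1\}$ and $\{y=1\}$ gives a different value), so the identity really does require the configuration you isolate, in which the chord leaves the square through the sides $\{x=0\}$ and $\{y=0\}$ --- precisely the situation of the points $(j^k,\tfrac12)$ and $(\tfrac12,l^{2k})$ in the application, so the use made of the lemma there is unaffected.
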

We shall illustrate the approach on the measure of symmetry $\varphi_{j,l}.$
\begin{theorem}
\label{MainJL}
Exist convex bodies $M_n\in\mathcal{K}_n$ which satisfy
$$
d(j(M_n),l(M_n))=1-\frac8n+o\left(\frac1n\right).
$$
In the other words, the estimate \eqref{MordhorstJL} is asymptotically sharp up to order $n$.
\end{theorem}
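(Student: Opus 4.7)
The plan is to take $M_n$ as a Cartesian product of the two preliminary constructions, so that Lemma~\ref{BodyProd} reduces the computation of $j(M_n)$ and $l(M_n)$ to the data supplied by Lemmas~\ref{JohnLowner} and~\ref{LownerJohn}. I set $n_1=\lfloor(n-2)/2\rfloor$ and $n_2=n-2-n_1$ (so $n_1+n_2=n-2$ and $n_i\sim n/2$), and define
\[
M_n \;=\; M^{(1)}_{n_1}\times M^{(2)}_{n_2}\;\subset\;\mathbb{R}^{n_1+1}\times\mathbb{R}^{n_2+1}\;=\;\mathbb{R}^n.
\]
By Lemma~\ref{BodyProd}, $j(M_n)=(j(M^{(1)}_{n_1}),j(M^{(2)}_{n_2}))$ and analogously for $l$. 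Combined with Lemmas~\ref{JohnLowner} and~\ref{LownerJohn}, both $j(M_n)$ and $l(M_n)$ lie in the two-dimensional plane $\Pi=\mathbb{R}e_{n_1+1}+\mathbb{R}e_n$, with $\Pi$-coordinates
\[
j(M_n)=\left(\tfrac{1}{n_1}+o\!\left(\tfrac{1}{n}\right),\,\tfrac{1}{2}\right),\qquad
l(M_n)=\left(\tfrac{1}{2},\,\tfrac{1}{n_2}+o\!\left(\tfrac{1}{n}\right)\right).
\]

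The second step is to identify $M_n\cap\Pi$ with the unit square $[0,1]^2$. For each factor, the two defining ``end'' bodies of the convex hull both contain the origin of $\mathbb{R}^{n_i}$ (the ball obviously, the simplex because $\Delta_{n_i}$ is inscribed in $B^{n_i}_2$); hence every height-$t$ section, being a Minkowski combination of these two bodies, also contains the origin, so $M^{(i)}_{n_i}\supset\{0\}\times[0,1]$. Taking products gives $M_n\cap\Pi=[0,1]\times[0,1]$, the reverse inclusion being trivial. Because the line joining $j(M_n)$ and $l(M_n)$ lies entirely in $\Pi$, the chord $\vol_1(l\cap M_n)$ appearing in the definition of $d$ coincides with $\vol_1(l\cap[0,1]^2)$, which is precisely the setup for Lemma~\ref{fool}.

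Plugging the $\Pi$-coordinates into Lemma~\ref{fool} then yields
\[
d\bigl(j(M_n),l(M_n)\bigr)=\frac{\bigl(\tfrac{1}{2}-\tfrac{1}{n_1}\bigr)\bigl(\tfrac{1}{2}-\tfrac{1}{n_2}\bigr)}{\tfrac{1}{4}-\tfrac{1}{n_1n_2}}\bigl(1+o(1)\bigr)=\frac{(n_1-2)(n_2-2)}{n_1n_2-4}\bigl(1+o(1)\bigr).
\]
Since $(n_1-2)(n_2-2)=n_1n_2-2(n-2)+4$, this ratio is maximized under $n_1+n_2=n-2$ when $n_1n_2$ is largest, i.e.\ for the balanced split I already chose. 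With $n_1n_2=(n-2)^2/4+O(1)$ the fraction telescopes to $(n-6)/(n+2)+o(\tfrac{1}{n})$, hence
\[
d\bigl(j(M_n),l(M_n)\bigr)=1-\tfrac{8}{n+2}+o\!\left(\tfrac{1}{n}\right)=1-\tfrac{8}{n}+o\!\left(\tfrac{1}{n}\right).
\]

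The only non-routine step is the identification $M_n\cap\Pi=[0,1]^2$: without it the chord $\vol_1(l\cap M_n)$ would have to be estimated by hand and could easily distort the constant $8$. Once that geometric observation is in hand, everything else is short asymptotic bookkeeping using the coordinates already established in Lemmas~\ref{JohnLowner} and~\ref{LownerJohn}.
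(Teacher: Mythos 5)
Your proposal is correct and follows essentially the same route as the paper: the product $M^{(1)}_{n_1}\times M^{(2)}_{n_2}$, Lemma~\ref{BodyProd} together with Lemmas~\ref{JohnLowner} and~\ref{LownerJohn}, the identification of the relevant two-dimensional section of $M_n$ with the unit square, and then Lemma~\ref{fool}, differing only in that you treat even and odd $n$ uniformly via $n_1=\lfloor (n-2)/2\rfloor$ where the paper splits into the cases $n=2k$ and $n=2k+1$. The one bookkeeping point to fix is that the multiplicative error in your application of Lemma~\ref{fool} should be recorded as $\bigl(1+o\bigl(\tfrac1n\bigr)\bigr)$ rather than $\bigl(1+o(1)\bigr)$ --- the sharper form does follow from the $o\bigl(\tfrac1n\bigr)$ data of Lemmas~\ref{JohnLowner} and~\ref{LownerJohn} because all quantities involved stay bounded away from zero, and it is needed to justify the final $o\bigl(\tfrac1n\bigr)$ precision.
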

\begin{proof}
Prove this statement for $n=2k$. Consider the next $2k$-dimensional convex bodies
$$
M_{2k}=M_{k-1}^{(1)}\times M_{k-1}^{(2)},
$$
where $M_{k-1}^{(1)}$ and $M_{k-1}^{(2)}$ defined in lemmas \ref{JohnLowner} and \ref{LownerJohn}.
Applying lemma \ref{BodyProd} for convex bodies $M_{2k}$ and using lemmas  \ref{JohnLowner} and \ref{LownerJohn}, we have the next representations for centers of John ellipsoid and L\"owner ellipsoid
\begin{align*}
j(M_{2k})&=(j(M_{k-1}^{(1)}), j(M_{k-1}^{(2)}))=\Bigl(\underbrace{0, 0, 0,\ldots,0,}_{\text{$k-1$ coordinates}}j^{k},\underbrace{0, 0, 0,\ldots,0,}_{\text{$k-1$ coordinates}}\frac12\Bigr),\\
l(M_{2k})&=(l(M_{k-1}^{(1)}), l(M_{k-1}^{(2)}))=\Bigl(\underbrace{0, 0, 0,\ldots,0,}_{\text{$k-1$ coordinates}}\frac12,\underbrace{0, 0, 0,\ldots,0,}_{\text{$k-1$ coordinates}}l^{2k}\Bigr),
\end{align*}
where $j^k=\frac1k+o\left(\frac1k\right)$ and $l^{2k}=\frac1k+o\left(\frac1k\right).$

Denote by $s$ the line passing through the points $j(M_{2k})$ and $l(M_{2k})$. Note that due to representations for $j(M_{2k})$ and $l(M_{2k})$, it is enough to compute the length of intersection  $s$  with the body $M_{2k}$ on the plane containing $e_k$ and $e_{2k}$. The intersection of this plane with $M_n$ is square with vertexes $0$, $e_k,$ $e_{2k}$ and $e_k+e_{2k}$. Applying lemma \ref{fool} to the points $(j^k,\frac12)$ and $(\frac12,l^{2k})$, for large enough $n$ we have
\begin{multline*}
d(j(M_{2k}),l(M_{2k}))=\frac{\|j(M_{2k})-l(M_{2k})\|}{\vol_1(s\cap M_{2k})}=\frac{|j^k-\frac12|\,|l^{2k}-\frac12|}{|\frac14-l^{2k}j^k|}=\\
=\frac{(1-\frac2k+o\left(\frac1k\right))(1-\frac2k+o\left(\frac1k\right))}{1+o\left(\frac1k\right)}=1-\frac4k+o\left(\frac1k\right).
\end{multline*}

In case $n=2k+1$ the prove of the statement is similar by construction
$$
M_{2k+1}=M_{k-1}^{(1)}\times M_{k}^{(2)}.
$$ 
\end{proof}

Using the same argument for convex bodies
\begin{align*}
F_{2k}&=F_{k-1}^{(1)}\times F_{k-1}^{(2)},\quad F_{2k+1}=F_{k-1}^{(1)}\times F_{k}^{(2)}\qquad\qquad\qquad\qquad\qquad\\
\text{and}\qquad\qquad\qquad\qquad\qquad\qquad&\\
W_{2k}&=W_{k-1}^{(1)}\times W_{k-1}^{(2)},\quad W_{2k+1}=W_{k-1}^{(1)}\times W_{k}^{(2)}\\
\end{align*}
and taking into account lemmas \ref{lemmaCentroidJohn}, \ref{lemmaJohnCentroid}, \ref{lemmaCentroidLowner},  and  \ref{lemmaLownerCentroid} we can provide extremal convex bodies for measures of symmetry $\varphi_{g,l}$ and $\varphi_{g,j}$
\begin{align*}
d(g(F_n),j(F_n))=1-\frac1n\left(4+\frac{2e}{e-1}\left(1+\frac{\pi}2+\frac{\displaystyle e^{-\frac{\pi}{4}}}{\displaystyle\erf\left(\sqrt\pi/2\right)+1}\right)\right)+o\left(\frac1n\right),
\end{align*}
and
\begin{align*}
d(g(W_n),l(W_n))=1-\frac1n\left(4+\frac{2e}{(e-1)\left(1-\sqrt{\frac2\pi}\right)}\right)+o\left(\frac1n\right)
\end{align*}
where  $4+\frac{2e}{e-1}\left(1+\frac{\pi}2+\frac{\displaystyle e^{-\frac{\pi}{4}}}{\displaystyle\erf\left(\sqrt\pi/2\right)+1}\right)\approx 13$ and $4+\frac{2e}{(e-1)\left(1-\sqrt{\frac2\pi}\right)}\approx 20.$



\begin{thebibliography}{99}
\bibitem{KeithBall}
{\it Ball K.\/} An Elementary Introduction to Modern Convex Geometry 
// MSRI Publications Vol. 31, 1997.
\bibitem{BONNESEN}
{\it Bonnesen T., Fenchel W.\/} Theory of convex bodies.  BCS Associates, 1987.
\bibitem{Danzer}
{\it Danzer L., Laugwitz D., Leuz H.\/}
Über das Löwnersche Ellipsoid und sein Analogen unter deu einem Eikorper einbeschriebener Ellipsoiden 
// Arch. Math. 8, 1957, P.214--219.
\bibitem{Hadwiger}
{\it Hadwiger H.\/} Vorlesungen Über Inhalt, Oberfläche und Isoperimetrie. Springer-Verlag, Berlin Heidelberg, 1957.
\bibitem{JohnGeom}
 {\it John F.\/} Extremum Problems with Inequalities as Subsidiary Conditions 
// Studies and Essays
Presented to R. Courant on his 60th Birthday, Interscience Publishers Inc.,
New York, 1948, P. 187--204.
\bibitem{MMCSEW}
{\it Meyer M., Sch\"utt C., Werner E. M.\/} New affine measures of symmetry for convex bodies  
// Advances in Mathematics, Vol. 228, Issue 5, 2011, P. 2920--2942.
\bibitem{MSW1}
{\it Meyer M., Sch\"utt C., Werner E.M.} Affine invariant points
// Israel J. Math. 208, No. 1,  2015, P. 163--192.
\bibitem{MSW2}
{\it Meyer M., Sch\"utt C., Werner E.M.} Dual affine invariant points
// Indiana Univ. Math. J. 64 ,No. 3, 2015, P. 735--768.
\bibitem{Mordhorst}
{\it Mordhorst O.\/} New results on affine invariant points// Israel J. Math. 219, No. 2, 2017, P. 529-548.
\bibitem{Pajor2}
{\it Pajor A.\/} Sous-espaces $l^1_n$ des espaces de Banach 
// Hermann, Paris, Collection Travaux en cours, 1985.
\bibitem{Pajor1}
{\it Pajor A.\/} Volumes mixtes et sous-espaces $l^n_1$ des espaces de Banach //
Colloquium in honor of Laurent Schwartz, Vol. 1. Astérisque No. 131, 1985, P. 401–411.
\bibitem{Pisier}
{\it Pisier G.\/} The volume of convex bodies and Banach space theory, Cambridge Tracts in Mathematics 94, Cambridge University Press, 1989.
\bibitem{Shcneider}
{\it Schneider R.\/} Convex Bodies: The Brunn–Minkowski Theory, Cambridge University Press, 2013.
\bibitem{Bender}
{\it Whittaker E.T., Watson G.N.\/} A course of modern analysis. Cambridge university press, 1963.
\bibitem{Zaguskin}
{\it Zaguskin, V. L.\/} Circumscribed and inscribed ellipsoids of extremal volume // (Russian) Uspehi Mat. Nauk 13 1958 no. 6 (84), 89–93.
\end{thebibliography}
\end{document}